\definecolor{darkgreen}{rgb}{0.0, 0.7, 0.0}
\definecolor{purple}{rgb}{0.5, 0.0, 0.5}
\definecolor{red}{rgb}{0.8, 0.2, 0.0}
\newtheorem{thm}{Theorem}[section]
\newtheorem{bthm}{Theorem}
\newtheorem{lemma}[thm]{Lemma}
\newtheorem{prop}[thm]{Proposition}
\numberwithin{equation}{section}
\theoremstyle{definition}
\newtheorem{defi}[thm]{Definition}
\theoremstyle{remark}
\newtheorem{remark}[thm]{Remark}
\newtheorem{example}[thm]{Example}
\newcommand{\Z}{\mathbb{Z}}
\newcommand{\Pic}{\operatorname{Pic}}
\def \Im{{\rm Im}}
\def \P{\mathbb{P}}
\def \F{\mathcal F}
\def\I{\mathcal I}
\def \L{\mathcal L}
\def \E{\mathcal E}
\def \G{\mathcal G}
\def\O{\mathcal O}
\def\M0{\mathcal M^0}
\DeclareMathOperator{\Proj}{{Proj}}
\DeclareMathOperator{\Sym}{{Sym}}
\title[Non-big Ulrich bundles: on quadrics and of small numerical dimension]{Non-big Ulrich bundles: the classification on quadrics and the case of small numerical dimension}
\author[A.F. Lopez, R. Mu\~{n}oz and J.C. Sierra]{Angelo Felice Lopez*, Roberto Mu\~{n}oz and Jos\'e Carlos Sierra}
\address{\hskip -.43cm Angelo Felice Lopez, Dipartimento di Matematica e Fisica, Universit\`a di Roma
Tre, Largo San Leonardo Murialdo 1, 00146, Roma, Italy. e-mail {\tt lopez@mat.uniroma3.it}}
\address{\hskip -.43cm Roberto Mu\~{n}oz, Departamento de Matem\'atica Aplicada a las TIC, ETSISI Universidad Polit\'ecnica de Madrid. C/ Alan Turing s/n. 28031, Madrid, Spain. email: {\tt roberto.munoz@upm.es}}
\address{\hskip -.43cm Jos\'e Carlos Sierra, Departamento de Matem\'aticas Fundamentales, Facultad de Ciencias, UNED,  C/ Juan del Rosal 10, 28040 Madrid, Spain. e-mail {\tt jcsierra@mat.uned.es}}
\thanks{* Research partially supported by  PRIN ``Advances in Moduli Theory and Birational Classification'' and GNSAGA-INdAM}
\thanks{{\it Mathematics Subject Classification} : Primary 14J60. Secondary 14J40, 14N05.}
\begin{document} 

\begin{abstract} 
On any smooth $n$-dimensional variety we give a pretty precise picture of rank $r$ Ulrich vector bundles with numerical dimension at most $\frac{n}{2}+r-1$. Also, we classify non-big Ulrich vector bundles on quadrics and on the Del Pezzo fourfold of degree $6$.  
\end{abstract}

\maketitle

\section{Introduction}

Let $X \subseteq \P^N$ be a smooth irreducible complex closed variety of dimension $n \ge 1$. In this paper we consider the investigation of positivity properties of Ulrich vector bundles on $X$. Recall that a vector bundle $\E$ on $X$ is Ulrich if $H^i(\E(-p))=0$ for all $i \ge 0$ and $1 \le p \le n$. The importance of Ulrich vector bundles and the consequences on the geometry of $X$ are well described for example in \cite{es, b1, cmp} and references therein. 

It was highlighted in our previous work \cite{lo, lm, ls} that most Ulrich vector bundles should be at least big, unless $X$ is covered by linear spaces of positive dimension. In particular any Ulrich vector bundle is very ample if $X$ does not contain lines by \cite[Thm.~1]{ls}. If $n \le 3$, the classification of non-big Ulrich vector bundles was achieved in \cite{lm}. On the other hand, when studying higher dimensional varieties, several new difficulties appear. For example, before this paper, as far as we know, no class of varieties of arbitrary dimension $n \ge 1$ for which non-big Ulrich vector bundles were classified was known except for $(\P^n, \O_{\P^n}(d))$ (in which case either $d=1$ and the only Ulrich vector bundles are $\O_{\P^n}^{\oplus r}, r \ge 1$ or $d \ge 2$ and they are all very ample by what we said above). 

A nice class of $n$-dimensional varieties, just coming after projective spaces, and for which Ulrich vector bundles are known, is the one of quadrics $Q_n \subset \P^{n+1}$. In fact, it follows by \cite[Rmk.~2.5(4)]{bgs} (see also \cite[Prop.~2.5]{b1}, \cite[Exa.~3.2]{ahmpl}) that the only indecomposable Ulrich vector bundles on $Q_n$ are the spinor bundles $\mathcal S, \mathcal S'$ and $\mathcal S''$ (see Definition \ref{spin}). Note that they are never ample as their restriction to lines is not ample by \cite[Cor.~1.6]{o}. On the other hand, it was not known which of these are big, unless $n \le 8$, this case following easily from \cite[Rmk.~2.9]{o}. 

Our first result is a classification of non-big Ulrich vector bundles on quadrics. 

\begin{bthm} 
\label{main1}

\hskip 3cm

An Ulrich vector bundle $\E$ on $Q_n$ is not big if and only if $\E$ is one of the following
$$\rm Table \ 1$$
$$\begin{tabular}{|c|c|c|c|}
\hline
$n$ & $\E$ \\
\hline
2 & $(\mathcal S')^{\oplus r}, (\mathcal S'')^{\oplus r}, r \ge1$ \\
\hline
3 & $\mathcal S$ \\
\hline
4 & $\mathcal S', \mathcal S'', \mathcal S' \oplus \mathcal S''$ \\
\hline
5 & $\mathcal S$ \\
\hline
6 & $\mathcal S', \mathcal S'', (\mathcal S')^{\oplus 2}, (\mathcal S'')^{\oplus 2}$ \\
\hline
10 & $\mathcal S', \mathcal S''$ \\
\hline
\end{tabular}.$$ 
\end{bthm}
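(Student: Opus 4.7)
The plan is to reduce Theorem 1 to a finite combinatorial computation of top Segre classes. By the cited results of Buchweitz--Greuel--Schreyer, any Ulrich bundle $\mathcal{E}$ on $Q_n$ decomposes as a direct sum of spinor bundles, so I need only test bigness for $\mathcal{E} = \mathcal{S}^{\oplus a}$ (when $n$ is odd) and $\mathcal{E} = (\mathcal{S}')^{\oplus a}\oplus(\mathcal{S}'')^{\oplus b}$ (when $n$ is even). Spinor bundles are globally generated (arising from the spin representation of $\mathrm{Spin}(n+2)$), hence nef; for such $\mathcal{E}$ of rank $r$, bigness is equivalent to $c_1(\mathcal{O}_{\mathbb{P}(\mathcal{E})}(1))^{n+r-1}>0$, which equals $\int_{Q_n} s_n(\mathcal{E})$. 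Thus bigness reduces to a top-Segre-class computation in $A^{\ast}(Q_n)$.

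The second step would be the actual evaluation, using the well-known generators of $A^{\ast}(Q_n)$ (the hyperplane class plus one maximal-subspace class for $n$ odd, or two classes $\ell', \ell''$ for $n$ even) and the classical Chern classes of the spinor bundles. Whitney's formula $s_n(\mathcal{E}_1\oplus\mathcal{E}_2)=\sum_{i+j=n}s_i(\mathcal{E}_1)s_j(\mathcal{E}_2)$ then turns each candidate into an explicit integer. For single summands, non-bigness can also be read geometrically: the spinor map $Q_n\to G(r,2^{\lceil n/2\rceil})$ factors through the spinor variety of maximal isotropic subspaces, of dimension $m(m\pm 1)/2$, and when this falls below what is needed to have numerical dimension $n+r-1$, the bundle must be non-big. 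This accounts for the large-$n$ case $n=10$, where the $D_6$ spinor variety has dimension only $15$ against $n+r-1=25$ for the rank-$16$ bundles $\mathcal{S}', \mathcal{S}''$.

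For mixed direct sums on even-dimensional quadrics and for higher multiplicities, cross-terms $s_i(\mathcal{S}')s_j(\mathcal{S}'')$ may compensate for vanishing of pure terms, and the explicit intersection rules of $\ell', \ell''$ on $Q_{2m}$ have to be invoked to decide whether $s_n$ still vanishes; I expect this to yield exactly the remaining entries of Table 1, namely $\mathcal{S}'\oplus\mathcal{S}''$ on $Q_4$ and the multi-copy examples on $Q_2$ and $Q_6$. The main obstacle will be treating all $n$ uniformly: since the rank of the spinor bundle grows like $2^{\lfloor(n-1)/2\rfloor}$, there are infinitely many multiplicity vectors $(a,b)$ a priori, and one needs a clean closed-form or strict positivity statement for $\int_{Q_n} s_n(\mathcal{E})$ that both reproduces Table 1 and rules out every other case, most delicately all $n\ge 7$ apart from $n=10$. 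The isolation of $n=10$ reflects the special geometry of $\mathrm{Spin}(12)$, and identifying the exact Schubert coefficient responsible for the vanishing of $s_{10}(\mathcal{S}')$ will be the technical heart of the argument.
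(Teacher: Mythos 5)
Your reduction to direct sums of spinor bundles and to a top Segre class computation is exactly how the paper handles the range $2 \le n \le 10$, but as written the proposal has two genuine gaps, one of which you flag yourself without resolving. First, the infinitely many multiplicity vectors $(a,b)$: the paper disposes of these with Lemma~\ref{big}, which shows that for globally generated bundles the classes $s_i(\E^*)=\pi_*\xi^{r-1+i}$ are effective \emph{and nef}, so every product $s_i(\E^*)s_{n-i}(\F^*)$ is $\ge 0$ and $\E\oplus\F$ is big if and only if a single such product is positive. This rules out the cancellation among Whitney cross-terms that you worry about, makes bigness monotone under adding summands, and reduces each $n$ to checking a handful of small $(a,b)$. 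Second, and more seriously, you have no mechanism for the infinitely many dimensions $n\ge 11$; a ``clean closed form for $s_n$'' is not what the paper finds. The key new input is Proposition~\ref{rette}: for a \emph{non-big} Ulrich bundle one has $\dim F_1(X,x)+h(\E,x)\ge r$, where $h(\E,x)$ is the number of trivial summands of $\E_{|L}$ for a line $L$ through $x$. On $Q_n$ this reads $n-2+2^{\lfloor (n-3)/2\rfloor}\ge 2^{\lfloor (n-1)/2\rfloor}$, which fails for every $n\ge 11$, so all spinor bundles there are big and Lemma~\ref{big} finishes. This single geometric estimate, proved via an incidence variety of vertices of the scrolls $\varphi(\P(\E_{|L}))$, is the idea missing from your plan.

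Your geometric explanation of the $n=10$ case is also incorrect: the image of $\varphi_{\mathcal S'}$ is not contained in the $15$-dimensional $D_6$ spinor variety, since the paper computes $\nu(\mathcal S'_{10})=24$ (because $c_9(S')=-176\ne 0$ while $c_{10}(S')=0$); were your containment true one would get $\nu\le 15$. Non-bigness on $Q_{10}$ is a numerical accident, $c_{10}(S')=0$, detected only after computing all Chern classes of $S'$ in \eqref{chern} via restriction to the two families of $\P^5$'s and the relations $c(S')c((S')^*)=1$ and $(S')^*\cong S''(1)$; the same computation is what settles $n=9$ by restriction. So the skeleton of your first step is right, but the two load-bearing ingredients --- the positivity of the Segre products and the line estimate of Proposition~\ref{rette} --- still need to be supplied.
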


The proof of the above theorem introduces, in fact on any variety $X$, a new geometrical estimate on the relation between Ulrich vector bundles and the Fano variety of some linear subspaces contained in $X$, see Proposition \ref{rette}. This estimate and the one given in Proposition \ref{fam1} are of independent interest and might have several applications. We show this by classifying Ulrich vector bundles of small numerical dimension $\nu(\E):=  \nu(\O_{\P(\E)}(1))$, a measure of positivity of $\E$. In general, in the presence of a rank $r$ Ulrich vector bundle $\E$, \cite[Thm.~2]{ls} shows that we can find two kinds of linear spaces covering $X$, namely the fibers of $\Phi_{\E} : X \to \mathbb G(r-1, \P H^0(\E))$ and the images on $X$ of the fibers of $\varphi_{\E} : \P(\E) \to \P H^0(\O_{\P(\E)}(1))$. Whenever the dimension of these spaces is at least $\frac{n}{2}$, Sato's classification \cite{sa2} can be applied. In \cite[Cor.~4]{ls} we dealt with the case of small numerical dimension of $\det \E$, corresponding to fibers of $\Phi_{\E}$. Instead, dealing with $\nu(\E)$, we have that $X$ is covered by linear spaces of dimension $n+r-1-\nu(\E)$, thus giving rise to the natural bound $\nu(\E) \le \frac{n}{2}+r-1$, appearing in the theorem below. We have:

\begin{bthm} 
\label{main2}

\hskip 3cm

Let $X \subseteq \P^N$ be a smooth irreducible variety of dimension $n \ge 1$ and let $\E$ be a rank $r$ Ulrich vector bundle on $X$. Then $\nu(\E) > \frac{n}{2}+r-1$ unless $(X,\O_X(1))$ is either:
\begin{itemize}
\item [(i)] $(\P(\F), \O_{\P(\F)}(1))$, where $\F$ is a rank $n-b+1$ very ample vector bundle over a smooth irreducible variety $B$ of dimension $b$ with $0 \le b \le \frac{n}{2}$.
\item [(ii)] $(Q_{2m}, \O_{Q_{2m}}(1))$ with $1 \le m \le 3$, $\nu(\E) = m+r-1$ and $\E \cong (\mathcal S')^{\oplus r}$ or $(\mathcal S'')^{\oplus r}$ when $m=1$, $\mathcal S'$ or $\mathcal S''$ when $2 \le m \le 3$. 
\end{itemize}
Moreover, in case (i) with $\nu(\E) \le \frac{n}{2}+r-1$, denoting by $p : X \cong \P(\F) \to B$ the projection map, we have two cases: 
\begin{itemize}
\item [(i1)] If $c_1(\E)^n=0$, then $\nu(\E) = b+r-1$ and $\E \cong p^*(\G(\det \F))$, where $\G$ is a rank $r$ vector bundle on $B$ such that $H^q(\G \otimes S^k \F^*)=0$ for $q \ge 0, 0 \le k \le b-1$ and $c_1(\G(\det \F))^b \ne 0$.
\item [(i2)] If $c_1(\E)^n>0$, then $b \le \frac{n}{2}-1, \nu(\E) \ge b+r$ and if $\nu(\E) = b+r$ then $\E_{|f}\cong T_{\P^{n-b}}(-1)\oplus \O_{\P^{n-b}}^{\oplus (r-n+b)}$ for any fiber $f=\P^{n-b}$ of $p$.
\end{itemize}
\end{bthm}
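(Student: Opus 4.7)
The first step is to extract a covering of $X$ by linear subspaces of at least half its dimension. Consider the projection $\pi:\P(\E)\to X$ and the map $\varphi_\E:\P(\E)\to \P H^0(\O_{\P(\E)}(1))$. Under the assumption $\nu(\E)\le\tfrac{n}{2}+r-1$, a general fibre $F$ of $\varphi_\E$ has dimension $n+r-1-\nu(\E)\ge\tfrac{n}{2}$, and since $\O_{\P(\E)}(1)|_F$ is trivial while its restriction to each $\pi$-fibre $\P^{r-1}$ is $\O_{\P^{r-1}}(1)$, the map $\pi|_F$ is finite. Thus $\pi(F)\subset X$ is, by \cite[Thm.~2]{ls}, a linear subspace of dimension at least $\tfrac{n}{2}$, and these subspaces sweep out $X$. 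Sato's classification \cite{sa2} then forces $(X,\O_X(1))$ to be either the quadric $Q_n$ or a linear-fibre projective bundle $(\P(\F),\O_{\P(\F)}(1))$ with $\F$ very ample of rank $n-b+1$ on a smooth base $B$ of dimension $0\le b\le\tfrac{n}{2}$ (the case $X=\P^n$ corresponding to $b=0$).

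In the quadric case, Theorem \ref{main1} already enumerates all non-big Ulrich bundles on $Q_n$. To isolate case (ii) one computes $\nu$ on each entry of Table 1: on $Q_{2m}$ one has $\nu(\mathcal S')=\nu(\mathcal S'')=m+2^{m-1}-1$, whereas on $Q_{2m+1}$ the spinor bundle $\mathcal S$ already satisfies $\nu(\mathcal S)>\tfrac{2m+1}{2}+2^m-1$, and direct sums increase $\nu$ in a controlled way. Checking entries one by one rules out the odd quadrics ($n=3,5$), the sum $\mathcal S'\oplus\mathcal S''$ on $Q_4$, the redundant multiplicities on $Q_6$, and both spinor bundles on $Q_{10}$, leaving exactly the pairs $(X,\E)$ listed in (ii).

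For the projective-bundle case, set $p:X=\P(\F)\to B$. Assume first $c_1(\E)^n=0$, so $\det\E$ is not big on $X$. Proposition \ref{rette} together with \cite[Cor.~4]{ls}, applied to the family of $\P^{n-b}$-fibres of $p$, implies that $\E$ is trivial on every fibre of $p$; Grauert's theorem then yields $\E\cong p^*\G'$ for a rank $r$ vector bundle $\G'$ on $B$. Writing $\G'=\G(\det\F)$ and pushing down via Leray using the well-known formulas for $R^ip_*\O_{\P(\F)}(-j)$ in terms of symmetric powers of $\F^*$ twisted by $(\det\F)^{-1}$, the Ulrich vanishings $H^i(\E(-jH))=0$ transform precisely into the system $H^q(\G\otimes S^k\F^*)=0$ for all $q\ge 0$ and $0\le k\le b-1$. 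Since $\O_{\P(\E)}(1)$ pulls back from $\O_{\P(\G')}(1)$ along $\P(\E)\cong\P(\F)\times_B\P(\G')\to\P(\G')$, the numerical dimension decomposes as $\nu(\E)=\nu(\G')+r-1=b+r-1$ under the non-degeneracy assumption $c_1(\G(\det\F))^b\ne 0$.

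The main obstacle is case (i2), where $c_1(\E)^n>0$. Here $\E$ is not a pull-back, so one must study $\E|_f$ on a general fibre $f\cong\P^{n-b}$: combining the Ulrich vanishings with Leray for $p$ restricts the indecomposable summands of $\E|_f$ to the two candidates $\O_{\P^{n-b}}$ and $T_{\P^{n-b}}(-1)$. Applying Proposition \ref{fam1} to the covering family of fibres yields the lower bound $\nu(\E)\ge b+r$, which, compared against the hypothesis $\nu(\E)\le\tfrac{n}{2}+r-1$, gives $b\le\tfrac{n}{2}-1$. When equality $\nu(\E)=b+r$ is attained, the bound in Proposition \ref{fam1} saturates and pins down the number of $T_{\P^{n-b}}(-1)$ summands on every fibre to exactly one, yielding the advertised description $\E|_f\cong T_{\P^{n-b}}(-1)\oplus\O_{\P^{n-b}}^{\oplus(r-n+b)}$. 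The technical heart, which I expect to require the most care, is to propagate this splitting type uniformly across every fibre rather than merely the general one, and to ensure that the positivity input from Proposition \ref{fam1} is sharp enough to exclude multiple tangential summands.
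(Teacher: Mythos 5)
Your overall skeleton (linear spaces $\Pi_y$ of dimension $\ge n+r-1-\nu(\E)\ge \frac n2$ covering $X$, then Sato) matches the paper, but there are two genuine gaps. First, Sato's classification \cite{sa2} of $n$-folds swept out by linear spaces of dimension $\ge\frac n2$ does \emph{not} return only quadrics and linear $\P^k$-bundles: it also contains $(\mathbb G(1,m+1),\O_{\mathbb G(1,m+1)}(1))$ in the Pl\"ucker embedding, and your reduction silently drops this case. The paper spends a real argument on it: Proposition \ref{fam1} (whose hypothesis $F_{m+1}(X,x)=\emptyset$ must be checked) produces a finite morphism $\P^{r-1}\to F_m(X,x)$, whence $r-1\le\dim F_m(X,x)=1$, so $r=2$; then $\varphi(\P(\E))$ would be a $\P^{m+1}$ equal to $\P H^0(\E)=\P^{2d-1}$ with $d=\frac{(2m)!}{m!(m+1)!}$, a contradiction. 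The same estimate from Proposition \ref{fam1}, combined with $r\ge 2^{m-1}$ and $\dim F_m(Q_{2m},x)=\frac{m(m-1)}{2}$, is how the paper pins down $2\le m\le 3$ on the quadric; your alternative of running through Table 1 of Theorem \ref{main1} and the values in Proposition \ref{big2}(ii) does work for quadrics (though your formula $\nu(\mathcal S')=m+2^{m-1}-1$ on $Q_{2m}$ is false for $m=4,5$, where $\nu=15$ and $24$), but it gives you nothing against the Grassmannian.

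Second, in case (i2) the step you yourself identify as the technical heart is not actually supplied: the claim that ``Ulrich vanishings plus Leray restrict the indecomposable summands of $\E_{|f}$ to $\O_{\P^{n-b}}$ and $T_{\P^{n-b}}(-1)$'' is an assertion, not an argument, and cohomological vanishing on $X$ does not control the splitting type on an individual fiber in this way. The paper's mechanism is Lemma \ref{nu=b+r}(ii): one first verifies hypothesis \eqref{cos} (every $\Pi_y$ maps to a point of $B$, because when $\nu(\E)=b+r$ one has $\dim\Pi_y\ge n-b-1\ge b+1$ and $\Pi_y$ is linear), then shows $\varphi(\pi^{-1}(f))=\P^r$ for \emph{every} fiber, extracts a presentation $0\to\O_f(-a)\to\O_f^{\oplus(r+1)}\to\E_{|f}\to 0$, proves $a=1$ by a degree computation on a line via the surjectivity of $H^0(\E)\to H^0(\E_{|L})$ from \cite{ls}, and concludes with Ellia's classification of uniform bundles \cite{e}, discarding the other two candidates by $H^0(\E_{|f}(-1))=0$ and a rank count. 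This is also what propagates the splitting to all fibers, the point you flag as open. Finally, the inequality $\nu(\E)\ge b+r$ does not follow from Proposition \ref{fam1} as you claim (that proposition bounds $\dim F_k(X,x)$ from below, not $\nu(\E)$); the paper derives it from \cite[Cor.~2]{ls}: since $c_1(\E)^n>0$ there are infinitely many $\Pi_y$ through a general point, so they cannot all be the whole fiber $\P^{n-b}$, forcing $\dim\Pi_y\le n-b-1$.
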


Note that the cases (i1) and (i2) actually occur at least in some special instances (for (i2) see Example \ref{un}, for (i1) pick any $\E \cong p^*(\G(\det \F))$ with $\G$ as in (i1), see \cite[Lemma 4.1(ii), Prop.'s 6.1 and 6.2]{lo}, \cite[Thm.~1]{lms} for some specific examples), while the cases in (ii) actually occur by Proposition \ref{big2}(ii).

It is usually difficult to compute the numerical dimension of Ulrich vector bundles, even for simple classes of varieties. Aside from projective spaces, perhaps the simplest ones are quadrics and linear $\P^k$-bundles. While for quadrics this can be now done, see Remark \ref{nd}, the same cannot be said for linear $\P^k$-bundles. In the latter case we have useful information when $k \le 2$ by Lemma \ref{nu=b+r}: there are only two cases: $\nu(\E)=n-k+r-1$ (and this is well-known) or $n-k+r$. A special but classical example with $k=2$ is the Del Pezzo fourfold of degree $6$. Another nice application of the methods in this paper is the classification of non-big Ulrich vector bundles on it, obtained using Lemma \ref{nu=b+r} and the resolution of Ulrich bundles given in \cite{ma}. 

\begin{bthm}
\label{p2xp2}

\hskip 3cm

Let $\P^2 \times \P^2 \subset \P^8$ be the Segre embedding and let $\E$ be a rank $r$ non-big Ulrich vector bundle on $\P^2 \times \P^2$. Then $\E \cong p^*(\O_{\P^2}(2))^{\oplus r}$, where $p: \P^2 \times \P^2 \to \P^2$ is one of the two projections.
\end{bthm}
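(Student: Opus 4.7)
Since $X=\P^2\times\P^2\cong \P(\F)$ with $\F=\O_{\P^2}(1)^{\oplus 3}$ is a linear $\P^k$-bundle over $B=\P^2$ via either projection, we have $n=4$, $b=2$, $k=2$. Applying Lemma~\ref{nu=b+r} gives $\nu(\E)\in\{b+r-1,b+r\}=\{r+1,r+2\}$, and since $\dim\P(\E)=n+r-1=r+3$, every Ulrich bundle on $X$ is automatically non-big; thus the task reduces to classifying all Ulrich bundles on $X$.

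In the case $\nu(\E)=r+1=\frac{n}{2}+r-1$, I would pick $p=p_1:X\to\P^2$ and apply Theorem~\ref{main2}(i1) to get $\E\cong p^*(\G\otimes \det\F)$ for some rank $r$ vector bundle $\G$ on $\P^2$ satisfying $H^q(\G\otimes S^k\F^*)=0$ for $q\ge0$ and $0\le k\le b-1=1$. Since $\F^*=\O_{\P^2}(-1)^{\oplus 3}$ and $\det\F=\O_{\P^2}(3)$, these read $H^q(\G)=0$ and $H^q(\G(-1))=0$ for all $q$. Beilinson's spectral sequence on $\P^2$, whose $E_1$-page is
$$E_1^{p,q}=H^q(\G(p))\otimes\Omega^{-p}_{\P^2}(-p), \qquad -2\le p\le 0,$$
and which converges to $\G$ concentrated in degree $0$, then collapses because only the column $p=-2$ survives the vanishings above. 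This forces $\G\cong H^2(\G(-2))\otimes\Omega^2_{\P^2}(2)\cong \O_{\P^2}(-1)^{\oplus r}$, using $\Omega^2_{\P^2}(2)\cong \O_{\P^2}(-1)$. Therefore $\E\cong p_1^*(\O_{\P^2}(-1)\otimes\O_{\P^2}(3))^{\oplus r}=p_1^*\O_{\P^2}(2)^{\oplus r}$, as claimed (up to exchanging the roles of $p_1$ and $p_2$).

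The remaining case $\nu(\E)=r+2$ falls outside the range $\nu\le \frac{n}{2}+r-1$ covered by Theorem~\ref{main2}(i1)--(i2) and must be excluded. Here I would invoke the resolution of Ulrich bundles on $\P^2\times\P^2$ given in \cite{ma}: its explicit linear form, combined with the symmetry between the two $\P^2$-bundle projections, should exhibit $\E$ as built from pullbacks of the type appearing in the first case, forcing $\nu(\E)=r+1$ and contradicting the standing assumption. The principal obstacle is exactly this step: converting Marchesi's resolution into a rigid numerical constraint on $\nu(\E)$ will likely require juggling intersection numbers on $\P(\E)$ symmetrically for both projections, whereas the first case reduces cleanly once Theorem~\ref{main2}(i1) and Beilinson on $\P^2$ are combined.
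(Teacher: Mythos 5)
Your handling of the range $\nu(\E)\le\frac{n}{2}+r-1=r+1$ is essentially sound and parallels the paper: case (i2) of Theorem \ref{main2} is impossible since it would force $b\le\frac{n}{2}-1=1$ while $b=2$, so one lands in (i1) with $c_1(\E)^4=0$, and your Beilinson computation on $\P^2$ recovers what the paper obtains by quoting \cite[Cor.~4.9]{ls}. However, your opening claim that ``every Ulrich bundle on $X$ is automatically non-big'' is false and rests on a misreading of Lemma \ref{nu=b+r}: that lemma requires the hypothesis \eqref{cos}, which you never verify, and even granting it, it only asserts $\nu(\E)=b+\dim\varphi(\pi^{-1}(f))\ge b+r-1$; items (i) and (ii) describe the two \emph{smallest} values, not an upper bound $\nu(\E)\le b+r$. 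Concretely, $\E=p^*\O_{\P^2}(2)\oplus q^*\O_{\P^2}(2)$ is Ulrich with $s_4(\E^*)=16>0$, hence big. For a non-big $\E$ the inequality $\nu(\E)\le r+2$ is simply the definition of non-bigness, so your case split survives, but not for the reason you give.

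The genuine gap is the case $\nu(\E)=r+2$ (equivalently $c_1(\E)^4>0$), which is the bulk of the paper's proof and which you explicitly leave open. The paper proceeds as follows: (1) each $\Pi_y$ is a positive-dimensional linear space in the Segre fourfold, hence lies in a fiber of $p$ or of $q$, and an irreducibility argument on an open subset of $\varphi(\P(\E))$ shows that a single projection works for all $y$, so \eqref{cos} holds; (2) Lemma \ref{nu=b+r}(ii) then gives $\E_{|f}\cong T_{\P^2}(-1)\oplus\O_{\P^2}^{\oplus(r-2)}$ on every fiber $f$ of that projection; (3) this restriction together with $\chi(\E_{|C}(-1))=0$ on a curve section forces $\det\E=A+(2r-1)B$; (4) restricting to a hyperplane section $Y$ and comparing rank, $c_1$ and $c_2$ with the resolution of \cite[Thm.~5.1]{ma} leaves only $(a,b,c,d)=(0,r,1,r-1)$ or $(1,r-1,0,r)$, which yield respectively $r=1$ and $s_4(\E^*)=6>0$, both contradicting non-bigness. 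Your sketch of ``exhibiting $\E$ as built from pullbacks'' to force $\nu(\E)=r+1$ does not indicate how to obtain any of these steps (in particular it is not true that such $\E$ would be a pullback; the contradiction is numerical, not structural), so the proposal as written does not prove the theorem.
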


Finally, we emphasize that Theorems \ref{main1}, \ref{main2} and \ref{p2xp2} will be important in our classification of non-big Ulrich vector bundles on fourfolds given in \cite{lms}.

\section{Notation and standard facts about (Ulrich) vector bundles}

Throughout this section we will let $X \subseteq \P^N$ be a smooth irreducible closed complex variety of dimension $n \ge 1$, degree $d$ and $H$ a hyperplane divisor on $X$.

\begin{defi}
For $k \in \Z : 1 \le k \le n$ we denote by $F_k(X)$ the Fano variety of $k$-dimensional linear subspaces of $\P^N$ that are contained in $X$. For $x \in X$, we denote by $F_k(X,x) \subset F_k(X)$ the subvariety of $k$-dimensional linear subspaces passing through $x$.
\end{defi}

\begin{defi}
Given a nef line bundle $\L$ on $X$ we denote by
$$\nu(\L) = \max\{k \ge 0: c_1(\L)^k \ne 0\}$$ 
the {\it numerical dimension} of $\L$. 
\end{defi}
As is well known (see for example \cite[(3.8)]{f}), when $\L$ is globally generated, $\nu(\L)$ is the dimension of the image of the morphism induced by $\L$.

\begin{defi}
\label{not}
Let $\E$ be a rank $r$ vector bundle on $X$. We denote by $c(\E)$ its Chern polynomial and by $s(\E)$ its Segre polynomial. We set $\P(\E) = \Proj(\Sym(\E))$ with projection map $\pi : \P(\E) \to X$ and tautological line bundle $\O_{\P(\E)}(1)$. We say that $\E$ is {\it nef (big, ample, very ample)} if $\O_{\P(\E)}(1)$ is nef (big, ample, very ample). If $\E$ is nef, we define the numerical dimension of $\E$ by $\nu(\E):=  \nu(\O_{\P(\E)}(1))$. When $\E$ is globally generated we define the map determined by $\E$ as
$$\Phi=\Phi_{\E} : X \to {\mathbb G}(r-1, \P H^0(\E))$$
and we set $\phi(\E)$ for the dimension of the general fiber of $\Phi_{\E}$. Moreover, we set
$$\varphi=\varphi_{\E} = \varphi_{\O_{\P(\E)}(1)} : \P(\E) \to \P H^0(\E)$$
$$\Pi_y = \pi(\varphi^{-1}(y)), y \in \varphi(\P(\E))$$
and  
$$P_x = \varphi(\P(\E_x)).$$
\end{defi}
Note that $\Phi(x)= [P_x]$ is the point in  ${\mathbb G}(r-1,\P H^0(\E))$ corresponding to $P_x$.

\begin{lemma}
\label{p}

Let $\E$ be a rank $r$ globally generated vector bundle on $X$. Let $x \in X$, so that $\P^{r-1} \cong P_x \subseteq \P H^0(\E)$. For any $y \in \varphi(\P(\E))$ we have that
$$y \in P_x \iff x \in \Pi_y.$$
\end{lemma}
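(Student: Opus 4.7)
The plan is purely definitional: I expect both sides of the equivalence to unwind to the same condition, namely the non-emptiness of the intersection $\pi^{-1}(x) \cap \varphi^{-1}(y)$ inside $\P(\E)$.

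First I would recall that, by definition, $P_x = \varphi(\P(\E_x)) = \varphi(\pi^{-1}(x))$. Hence $y \in P_x$ is equivalent to the existence of some $z \in \pi^{-1}(x)$ with $\varphi(z)=y$, which is just $\pi^{-1}(x) \cap \varphi^{-1}(y) \neq \emptyset$. In parallel fashion, $\Pi_y = \pi(\varphi^{-1}(y))$, so $x \in \Pi_y$ holds iff there exists $z \in \varphi^{-1}(y)$ with $\pi(z)=x$, which is again the same non-emptiness condition. Chaining the two equivalences through this common intermediate gives the claim.

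I do not anticipate any technical obstacle; the statement is a tautology about fibers of the two projections $\pi$ and $\varphi$ out of $\P(\E)$. The only point worth flagging is the role of the hypothesis that $\E$ is globally generated: it guarantees that $\varphi$ is a morphism defined on all of $\P(\E)$ (no base points), so that $\varphi^{-1}(y)$ and $\varphi(\pi^{-1}(x))$ are honest subsets, and furthermore that the surjection $H^0(\E) \twoheadrightarrow \E_x$ makes the restriction $\varphi|_{\pi^{-1}(x)}$ a linear embedding, legitimizing the isomorphism $P_x \cong \P^{r-1} \subseteq \P H^0(\E)$ stated in the lemma.
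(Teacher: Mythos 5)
Your proposal is correct and matches the paper's own proof: both reduce the two sides of the equivalence to the nonemptiness of $\P(\E_x) \cap \varphi^{-1}(y)$ and chain the equivalences through that common condition. Your added remark on why global generation makes $\varphi$ an everywhere-defined morphism is a sensible clarification but does not change the argument.
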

\begin{proof}
Since $P_x = \varphi(\P(\E_x))$ and $\Pi_y = \pi(\varphi^{-1}(y))$, we have 
$$y \in P_x \iff \exists z \in \P(\E_x) \cap \varphi^{-1}(y) \iff \exists z \in \varphi^{-1}(y): \pi(z)=x \iff x \in \Pi_y.$$
\end{proof}

We recall the following well-known fact (see for example \cite[Prop.~10.2]{eh}). 

\begin{lemma}
\label{bf}
Let $\E$ be a rank $r$ globally generated vector bundle on $X$. Then
\begin{equation}
\label{num}
\nu(\E) = r-1+\max\{k\ge 0: s_k(\E)\ne 0\}.
\end{equation}
\end{lemma}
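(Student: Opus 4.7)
The plan is to translate the numerical dimension condition on $\O_{\P(\E)}(1)$ into a condition on Segre classes via the standard projection formula for projective bundles. I will proceed in four short steps.

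First, observe that since $\E$ is globally generated on $X$, the tautological line bundle $\O_{\P(\E)}(1)$ on $\P(\E)$ is globally generated (its sections pulled back from $\E$ already generate it). In particular $\O_{\P(\E)}(1)$ is nef, so $\nu(\E)$ is defined and equals $\max\{k\ge 0:c_1(\O_{\P(\E)}(1))^k\ne 0\}$.

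Next, since $\dim \P(\E)=n+r-1$, I will separate the range $0\le k\le r-1$ from $k\ge r-1$. For $k\le r-1$, restriction of $c_1(\O_{\P(\E)}(1))^{r-1}$ to a fiber $\P^{r-1}$ of $\pi:\P(\E)\to X$ is the class of a point, so $c_1(\O_{\P(\E)}(1))^k\ne 0$ for all $0\le k\le r-1$; this already gives $\nu(\E)\ge r-1$, and corresponds to the $j=0$ term $s_0(\E)=1$ on the right-hand side of \eqref{num}.

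For $k=r-1+j$ with $j\ge 0$, the key input is the defining formula for Segre classes,
\[
\pi_*\bigl(c_1(\O_{\P(\E)}(1))^{r-1+j}\bigr)=s_j(\E)\in H^{2j}(X),
\]
together with the projection formula $\pi_*(c_1(\O_{\P(\E)}(1))^{r-1+j}\cdot \pi^*\alpha)=s_j(\E)\cdot\alpha$ for any $\alpha\in H^{2(n-j)}(X)$. If $s_j(\E)\ne 0$ then Poincar\'e duality on $X$ yields an $\alpha$ with $s_j(\E)\cdot\alpha\ne 0$ in $H^{2n}(X)$, forcing $c_1(\O_{\P(\E)}(1))^{r-1+j}\cdot\pi^*\alpha\ne 0$ in $H^{2(n+r-1)}(\P(\E))$, hence $c_1(\O_{\P(\E)}(1))^{r-1+j}\ne 0$. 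Conversely, if $c_1(\O_{\P(\E)}(1))^{r-1+j}\ne 0$, Poincar\'e duality on $\P(\E)$ produces a class $\beta$ with nonzero top-degree product; since the cohomology of $\P(\E)$ is generated over $H^*(X)$ by $c_1(\O_{\P(\E)}(1))$, we may take $\beta=\pi^*\alpha\cdot c_1(\O_{\P(\E)}(1))^i$ for suitable $\alpha$ and $i$, and then pushing forward and applying the same formula shows $s_j(\E)\ne 0$.

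Combining, for every $j\ge 0$ we have $c_1(\O_{\P(\E)}(1))^{r-1+j}\ne 0$ if and only if $s_j(\E)\ne 0$, which yields \eqref{num}. No step is really an obstacle; the only thing to be careful about is the equivalence in the last display, where one must invoke both the projection formula and Poincar\'e duality rather than quoting either one alone.
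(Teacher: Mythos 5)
The paper does not actually prove this lemma; it is quoted as a well-known fact with a pointer to \cite[Prop.~10.2]{eh}, so there is no in-paper argument to compare against. Your proof is correct and is essentially the standard argument underlying that reference: global generation of $\O_{\P(\E)}(1)$, nontriviality of $\xi^{k}:=c_1(\O_{\P(\E)}(1))^k$ for $k\le r-1$ by restriction to a fiber, and the push--pull identity combined with Poincar\'e duality and the projective bundle formula for $k\ge r-1$. Two points of precision are worth flagging, neither of which is a real gap. First, with the paper's conventions ($\P(\E)=\Proj(\Sym\E)$ and $s(\E)c(\E)=1$; see the proof of Lemma~\ref{big}) the push-forward identity is $\pi_*\xi^{r-1+j}=s_j(\E^*)=(-1)^js_j(\E)$ rather than $s_j(\E)$; since only nonvanishing matters, \eqref{num} is unaffected, but the sign should be kept straight. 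Second, your concluding claim that $\xi^{r-1+j}\ne0$ if and only if $s_j(\E)\ne0$ \emph{for each fixed} $j$ is slightly more than your converse direction delivers: the Poincar\'e dual class $\beta$ is in general a sum $\sum_i\pi^*\alpha_i\cdot\xi^i$, so a nonzero product only yields $s_{j+i}(\E)\ne0$ for \emph{some} $i\ge0$, i.e.\ $\max\{k:s_k(\E)\ne0\}\ge j$. That weaker statement, applied with $j=\nu(\E)-r+1$, still gives the inequality $\nu(\E)\le r-1+\max\{k: s_k(\E)\ne0\}$, which together with your forward direction proves \eqref{num}; the fixed-$j$ equivalence then follows a posteriori because a nef class satisfies $\xi^k\ne0$ for all $k\le\nu(\E)$, but as written it is a (harmless) overstatement of what the argument establishes.
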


In order to check bigness of direct sums we will use the ensuing

\begin{lemma}
\label{big}
Let $\E, \F$ be two globally generated vector bundles on $X$. Then $s_i(\E^*)s_{n-i}(\F^*) \ge 0$ for all $0 \le i \le n$. Moreover $\E \oplus \F$ is big if and only if $s_i(\E^*)s_{n-i}(\F^*)>0$ for some $i \in \{0,\ldots,n\}$. In particular, if $\E$ is big then $\E \oplus \F$ is big.
\end{lemma}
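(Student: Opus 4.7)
The plan is to reduce bigness of $\E \oplus \F$ to a single numerical condition on $X$, expand that condition via Whitney multiplicativity of Segre classes, and then invoke Fulton--Lazarsfeld positivity to ensure every term in the resulting sum is non-negative, so that positivity of the total equals positivity of some summand.

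First I would record the standard pushforward identity underlying all these manipulations: with the convention $\P(\E)=\Proj(\Sym \E)$ and $\xi=c_1(\O_{\P(\E)}(1))$, one has $\pi_*(\xi^{r-1+k})=s_k(\E^*)$, where the total Segre polynomial is defined by $s(\E^*)=1/c(\E^*)$. Since $\E\oplus\F$ is globally generated it is nef, so $\O_{\P(\E\oplus\F)}(1)$ is nef, and a nef line bundle is big iff its top self-intersection is strictly positive; pushing forward, this is exactly the condition $s_n((\E\oplus\F)^*)>0$ on $X$. Whitney multiplicativity, applied (after dualizing) to the split sequence $0 \to \E \to \E\oplus\F \to \F \to 0$, then gives $s((\E\oplus\F)^*) = s(\E^*)\,s(\F^*)$, whence
$$s_n\bigl((\E\oplus\F)^*\bigr) \;=\; \sum_{i=0}^{n} s_i(\E^*)\, s_{n-i}(\F^*).$$

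Second I would establish the pointwise non-negativity $s_i(\E^*)\,s_{n-i}(\F^*)\ge 0$ that makes the previous identity useful. The point is that $s_k(\E^*)$ equals the complete homogeneous symmetric polynomial of degree $k$ in the Chern roots of $\E$, and in particular it is a Schur polynomial in those roots. Since $\E$ and $\F$ are both globally generated, hence nef, the Fulton--Lazarsfeld positivity theorem for Schur polynomials in Chern roots of nef vector bundles (proven for ample bundles in \emph{Positive polynomials for ample vector bundles}, Ann.\ of Math.\ (1983), and extended to the nef case by a standard limiting argument) guarantees that the intersection number $\int_X s_i(\E^*)\, s_{n-i}(\F^*)$ is $\ge 0$. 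This is the one non-formal input, and is what I would call the main ingredient; the rest is bookkeeping with Chern and Segre classes.

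Combining the two steps, the displayed identity expresses $s_n((\E\oplus\F)^*)$ as a sum of non-negative numbers, so it is strictly positive if and only if at least one summand is, which is the stated ``if and only if''. For the final assertion, if $\E$ is big then $s_n(\E^*)>0$ by the pushforward calculation of the first step, and since $s_0(\F^*)=1$ the summand with $i=n$ already satisfies $s_n(\E^*)\, s_0(\F^*)>0$, so $\E\oplus\F$ is big.
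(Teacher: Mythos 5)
Your proposal is correct, and its skeleton is the same as the paper's: reduce bigness of $\E\oplus\F$ to $s_n((\E\oplus\F)^*)>0$, expand this via Whitney as $\sum_{i=0}^n s_i(\E^*)s_{n-i}(\F^*)$, show each summand is non-negative, and read off the equivalence (plus the special case $i=n$, $s_0(\F^*)=1$). The genuine difference is how you justify the termwise non-negativity. The paper does it elementarily from the projective bundle: $s_i(\E^*)=\pi_*\xi^{r-1+i}$ is an effective class because the tautological class $\xi$ is globally generated, while $s_{n-i}(\F^*)$ is nef because it pairs non-negatively with every subvariety of dimension $n-i$ (again by pushing forward a power of the nef tautological class of $\P(\F)$); an effective class paired with a nef class of complementary degree is $\ge 0$. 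You instead identify $s_k(\E^*)$ with the complete homogeneous (hence Schur) polynomial $s_{(k)}$ in the Chern roots and appeal to Fulton--Lazarsfeld positivity for nef bundles. That is valid, but be aware that a product of Schur polynomials of \emph{two different} bundles is covered by the several-bundles version of their theorem (the cone spanned by products $s_{\lambda}(\E)\,s_{\mu}(\F)$), not by the single-bundle statement as literally quoted; with that citation sharpened your argument is complete. The trade-off: the paper's route needs nothing beyond global generation and the pushforward formula, whereas yours imports a much stronger theorem but would apply unchanged to more general Schur-class expressions.
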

\begin{proof}
Set $\xi:= \O_{\P(\E)}(1)$. First, note that the Segre classes $s_i(\E^*)$ and $s_i(\F^*)$ are effective and nef. In fact, $s_i(\E^*) = \pi_*\xi^{r-1+i}$ is effective because $\xi$ is globally generated. Also $s_i(\E^*)$ is nef because for every subvariety $Z \subseteq X$ of dimension $i$ we have that
$$s_i(\E^*) \cdot Z =  \xi^{r-1+i} \cdot  \pi^*Z \ge 0.$$
Therefore $s_i(\E^*)s_{n-i}(\F^*) \ge 0$ for all $0 \le i \le n$. Since $c(\E \oplus \F)=c(\E)c(\F)$ we get that 
$$s(\E)s(\F) c(\E \oplus \F) =s(\E)s(\F)c(\E)c(\F)=1$$ 
hence $s(\E \oplus \F)=s(\E)s(\F)$. It follows that
$$s_n((\E \oplus \F)^*)= \sum\limits_{i=0}^n s_i(\E^*)s_{n-i}(\F^*).$$
Hence $\E \oplus \F$ is big if and only if $s_n((\E \oplus \F)^*)>0$, if and only if $s_i(\E^*)s_{n-i}(\F^*)>0$ for some $i \in \{0,\ldots,n\}$. Also, if $\E$ is big then $s_n(\E^*)>0$, hence $\E \oplus \F$ is big.
\end{proof}

\begin{defi}
Let $\E$ be a vector bundle on $X \subseteq \P^N$. We say that $\E$ is an {\it Ulrich vector bundle} if $H^i(\E(-p))=0$ for all $i \ge 0$ and $1 \le p \le n$.
\end{defi}

The following properties will be often used without mentioning.

\begin{remark}
\label{gen}
Let $\E$ be a rank $r$ Ulrich vector bundle on $X \subseteq \P^N$ and let $d = \deg X$. Then
\begin{itemize}
\item [(i)] $\E$ is $0$-regular in the sense of Castelnuovo-Mumford, hence $\E$  is globally generated (by \cite[Thm.~1.8.5]{laz1}).
\item [(ii)] $\E_{|Y}$ is Ulrich on a smooth hyperplane section $Y$ of $X$ (by \cite[(3.4)]{b1}).
\end{itemize}
\end{remark}

\begin{remark}
\label{kno1}
On $(\P^n, \O_{\P^n}(1))$ the only rank $r$ Ulrich vector bundle is $\O_{\P^n}^{\oplus r}$ by \cite[Prop.~2.1]{es}, \cite[Thm.~2.3]{b1}. 
\end{remark}

\section{Ulrich bundles on quadrics}

For $n \ge 2$ we let $Q_n \subset \P^{n+1}$ be a smooth quadric. We let $S$ ($n$ odd), and $S', S''$ ($n$ even), be the vector bundles on $Q_n$, as defined in \cite[Def.~1.3]{o}.
 
\begin{defi}
\label{spin}
The {\it spinor bundles} on $Q_n$ are ${\mathcal S}={\mathcal S}_n = S(1)$ if $n$ is odd and ${\mathcal S}'={\mathcal S}'_n = S'(1)$, ${\mathcal S}''={\mathcal S}''_n = S''(1)$, if $n$ is even. They all have rank $2^{\lfloor \frac{n-1}{2} \rfloor}$.
\end{defi}

\begin{lemma}
\label{kno2}
With the above notation we have:
\begin{itemize}
\item [(i)] $s(\mathcal S)=c(S), s(\mathcal S')=c(S''), s(\mathcal S'')=c(S')$.
\item [(ii)] $\nu(\mathcal S) = 2^{\frac{n-1}{2}}-1+\max\{k \ge 0: c_k(S)\ne 0\}$, 

\noindent $\nu(\mathcal S') = \nu(\mathcal S'') = 2^{\frac{n-2}{2}}-1+ \max\{k \ge 0: c_k(S')\ne 0\}$.
\item [(iii)] The spinor bundles on $Q_n$ are Ulrich.
\item [(iv)] The only indecomposable Ulrich vector bundles on $Q_n$ are the spinor bundles.
\item [(v)] Spinor bundles are not ample.
\end{itemize}
\end{lemma}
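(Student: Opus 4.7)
The plan is to assemble the five items from standard facts about the spinor bundles on $Q_n$ established by Ottaviani in \cite{o}; the core computation lives in part (i), and the remaining items follow quickly.

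For (i), I would start from Ottaviani's defining short exact sequences
$$0 \to S \to \O_{Q_n}^{\oplus 2^{(n+1)/2}} \to S(1) \to 0 \quad (n \text{ odd}),$$
$$0 \to S' \to \O_{Q_n}^{\oplus 2^{n/2}} \to S''(1) \to 0, \quad 0 \to S'' \to \O_{Q_n}^{\oplus 2^{n/2}} \to S'(1) \to 0 \quad (n \text{ even}).$$
Taking total Chern classes and using multiplicativity in short exact sequences one obtains $c(S)\,c(\mathcal S) = 1$ in the odd case and $c(S')\,c(\mathcal S'') = 1$, $c(S'')\,c(\mathcal S') = 1$ in the even case. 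Since $s(\mathcal E)$ is by definition the formal multiplicative inverse of $c(\mathcal E)$, the three identities claimed in (i) follow immediately.

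Part (ii) is then a one-line application of Lemma \ref{bf} combined with (i) and the rank formula $2^{\lfloor (n-1)/2 \rfloor}$ from Definition \ref{spin}. For (iii) one needs to check the Ulrich vanishings $H^i(S(1-p))=0$, and the corresponding ones for $S', S''$, for all $i\ge 0$ and $1 \le p \le n$; all of these are entries in Ottaviani's cohomology tables \cite[Thm.~2.3, Thm.~2.8]{o}, and this observation is exactly what is recorded in \cite[Rmk.~2.5(4)]{bgs}. Part (iv), the statement that $\mathcal S, \mathcal S', \mathcal S''$ exhaust all indecomposable Ulrich bundles on $Q_n$, is the classification of maximal Cohen--Macaulay modules over the homogeneous coordinate ring of the quadric, quoted from \cite[Rmk.~2.5(4)]{bgs} (as already recalled in the introduction).

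Finally for (v), I would restrict a spinor bundle to any line $\ell \subset Q_n$: by \cite[Cor.~1.6]{o} the restriction splits with a trivial summand $\O_\ell$ appearing, hence is not ample on $\ell$, so the spinor bundle itself is not ample. The only real obstacle throughout is conventional rather than substantive: one must match Ottaviani's normalizations (the twist by $\O(1)$, the choice between $S$ and $S(1)$, and the labeling of $S'$ versus $S''$ in the even case) with the ones used in Definition \ref{spin}; once this bookkeeping is done, every assertion is either a Chern-class identity coming from a trivial bundle, a direct citation, or a one-line consequence of an earlier statement.
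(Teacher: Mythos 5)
Your proposal follows the paper's own route almost verbatim: parts (i), (iii), (iv) and the line-restriction argument for (v) are exactly what the paper does, with the same citations. There is, however, one genuine gap, in part (ii). From part (i) you have $s(\mathcal S')=c(S'')$ and $s(\mathcal S'')=c(S')$, so the ``one-line application'' of Lemma \ref{bf} gives
$$\nu(\mathcal S') = 2^{\frac{n-2}{2}}-1+\max\{k \ge 0: c_k(S'')\ne 0\}, \qquad \nu(\mathcal S'') = 2^{\frac{n-2}{2}}-1+\max\{k \ge 0: c_k(S')\ne 0\},$$
which is \emph{not} the statement to be proved: the claim is that both numerical dimensions are equal and are computed from $c_k(S')$. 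Since $S'$ and $S''$ genuinely have different Chern classes in general (e.g.\ $c_5(S')\ne c_5(S'')$ on $Q_{10}$, as computed later in the paper), you cannot simply swap $S'$ for $S''$ without justification. The missing ingredient, which the paper supplies, is that an automorphism $f$ of $Q_n$ exchanging the two families of $\frac{n}{2}$-planes satisfies $f^*S'\cong S''$ and $f^*S''\cong S'$; since $f^*$ is an isomorphism on cohomology, $c_k(S')\ne 0$ if and only if $c_k(S'')\ne 0$, so the two maxima coincide and the asserted formula follows. You should add this symmetry argument.

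A much smaller point on (v): Ottaviani's \cite[Cor.~1.6]{o} describes the splitting on lines for $n\ge 3$; for $n=2$ the spinor bundles are the line bundles $\O_{\P^1}(1)\boxtimes\O_{\P^1}$ and $\O_{\P^1}\boxtimes\O_{\P^1}(1)$, which are trivial on the lines of one of the two rulings, so the conclusion still holds but needs to be stated separately rather than deduced from that corollary.
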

\begin{proof}
By \cite[Thm.~2.8]{o} there are exact sequences 
\begin{equation}
\label{ott1}
0 \to S \to \O_Q^{\oplus 2^{\lfloor \frac{n+1}{2} \rfloor}} \to \mathcal S \to 0
\end{equation}
\begin{equation}
\label{ott2}
0 \to S' \to \O_Q^{\oplus 2^{\lfloor \frac{n+1}{2} \rfloor}} \to \mathcal S'' \to 0, \ \ \ 0 \to S'' \to \O_Q^{\oplus 2^{\lfloor \frac{n+1}{2} \rfloor}} \to \mathcal S' \to 0.
\end{equation}
From \eqref{ott1} we get $c(S)c(\mathcal S)=1$, hence $s(\mathcal S)=c(S)$. Similarly, from \eqref{ott2} we get that $s(\mathcal S'')=c(S')$ and $s(\mathcal S')=c(S'')$. This gives (i). Now (ii) follows by (i), \eqref{num} and the fact that if $n$ is even and $f$ is an automorphism of $Q_n$ exchanging the $\frac{n}{2}$-planes in $Q_n$, then $f^*S' \cong S''$ and $f^*S'' \cong S'$ (see \cite[page 304]{o}). (iii) follows by \cite[Thms.~2.3 and 2.8]{o}. (iv) follows by \cite{k} (see also \cite[Rmk.~2.5(4)]{bgs}, \cite[Prop.~2.5]{b1}, \cite[Exa.~3.2]{ahmpl}). As for (v), we know that $\mathcal S'_2 \cong \O_{\P^1}(1) \boxtimes \O_{\P^1}$ and $\mathcal S''_2 \cong \O_{\P^1} \boxtimes \O_{\P^1}(1)$, while for $n \ge 3$, \cite[Cor.~1.6]{o} gives that $\mathcal S_{|L}, \mathcal S'_{|L}, \mathcal S''_{|L}$ all decompose as $\O_{\P^1}^{\oplus 2^{\lfloor \frac{n-3}{2} \rfloor}} \oplus \O_{\P^1}(1)^{\oplus 2^{\lfloor \frac{n-3}{2} \rfloor}}$ for any line $L$ contained in $Q_n$, hence $\mathcal S, \mathcal S'$ and $\mathcal S''$ are not ample.
\end{proof}

We collect a preliminary result about the numerical dimension of spinor bundles.

\begin{prop}
\label{big2}
With the above notation we have:
\begin{itemize}
\item [(i)] $\mathcal S_n$ is big if and only if $c_n(S) \ne 0$, $\mathcal S'_n$ or $\mathcal S''_n$ is big if and only if $c_n(S') \ne 0$.
\item [(ii)] For $2 \le n \le 10$ we have:
$$\begin{tabular}{|c|c|c|c|}
\hline
$n$ & $\nu(\mathcal S)$ \\
\hline
3 & 3 \\
\hline
5 & 6 \\
\hline
7 & 14 \\
\hline
9 & 24 \\
\hline
\end{tabular} \ \ \
\begin{tabular}{|c|c|c|c|}
\hline
$n$ & $\nu(\mathcal S')=\nu(\mathcal S'')$ \\
\hline
2 & 1 \\
\hline
4 & 3 \\
\hline
6 & 6 \\
\hline
8 & 15 \\
\hline
10 & 24 \\
\hline
\end{tabular}$$
\item [(iii)] For $2 \le n \le 10$ the vector bundles in Table 1 of Theorem \ref{main1} are the only non-big Ulrich vector bundles $\E$ on $Q_n$.
\end{itemize}
\end{prop}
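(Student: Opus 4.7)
The plan is to handle the three parts in order, with Lemma \ref{kno2} providing the main structural input.

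For part (i), a rank $r$ nef vector bundle $\E$ on an $n$-dimensional variety is big iff $\nu(\E)=n+r-1=\dim\P(\E)$. Setting $r=2^{\lfloor (n-1)/2\rfloor}$ and using Lemma \ref{kno2}(ii), we see that $\mathcal S_n$ is big iff $\max\{k\ge 0 : c_k(S)\ne 0\}=n$, equivalently $c_n(S)\ne 0$; the same argument handles $\mathcal S'_n$ and $\mathcal S''_n$ via $c_n(S')$, exploiting the automorphism of $Q_n$ that swaps $S'$ and $S''$ and so gives them identical Chern numbers.

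For part (ii) we perform an explicit Chern class computation in $A^*(Q_n)$ for each $n\in\{2,\ldots,10\}$. The starting point is the identity $c(S)c(\mathcal S)=1$, together with its even-dimensional variants $c(S')c(\mathcal S'')=1$ and $c(S'')c(\mathcal S')=1$, all coming from the exact sequences \eqref{ott1}, \eqref{ott2}. Combined with the well-known Schubert basis for $A^*(Q_n)$ (polynomial in $h$ below the middle degree, with an extra pair of Schubert classes $\sigma_\alpha,\sigma_\beta$ from the two families of $m$-planes when $n=2m$) and Ottaviani's explicit Chern class formulas for spinor bundles \cite{o}, one reads off the largest $k$ with $c_k(S)\ne 0$, respectively $c_k(S')\ne 0$; feeding this into Lemma \ref{kno2}(ii) produces the stated table. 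The intersection rules $\sigma_\alpha^2$, $\sigma_\beta^2$, $\sigma_\alpha\sigma_\beta$ in the middle Chow group, whose precise form depends on the parity of $m$, are the subtle ingredient in the even case.

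For part (iii), Lemma \ref{kno2}(iv) forces every Ulrich bundle on $Q_n$ to be a direct sum of spinors: $\mathcal S^{\oplus m}$ for $n$ odd and $(\mathcal S')^{\oplus a}\oplus(\mathcal S'')^{\oplus b}$ for $n$ even. By Lemma \ref{big} together with Lemma \ref{kno2}(i), bigness is detected by the degree-$n$ component of $c(S)^m$ (odd case) or $c(S'')^a c(S')^b$ (even case). Expanding these products using the Chern class information from (ii), and crucially invoking $\sigma_\alpha^2$ versus $\sigma_\alpha\sigma_\beta$ on the even quadrics, one checks case by case which sums are non-big, matching Table 1 of Theorem \ref{main1}. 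The main obstacle is part (ii) at $n\in\{8,9,10\}$, where the spinor bundles reach rank $16$ and the Chow-ring bookkeeping is heavy; Ottaviani's formulas together with the inductive restrictions $\mathcal S_n|_{Q_{n-1}}\cong\mathcal S'_{n-1}\oplus\mathcal S''_{n-1}$ and $\mathcal S'_n|_{Q_{n-1}}\cong\mathcal S''_n|_{Q_{n-1}}\cong\mathcal S_{n-1}$ are what keep the computation tractable.
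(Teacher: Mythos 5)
Your parts (i) and (iii) track the paper's proof closely: (i) is equivalent to the paper's observation that $\mathcal S$ is big iff $s_n(\mathcal S^*)=(-1)^nc_n(S)\ne 0$ via Lemma \ref{kno2}(i), and (iii) is the same case-by-case application of Lemma \ref{big} with Lemma \ref{kno2}(i),(iv), using the intersection rules for the two rulings of $Q_{2m}$. The gap is in part (ii) at $n=10$, which is the decisive case: it is where the unexpected non-big spinor bundles appear, and the $n=9$ entry is deduced from it via $(S'_{10})_{|Q_9}\cong S_9$. For $n=10$ the inputs you list do not determine $c_{10}(S')$. Ottaviani's explicit Chern class tables (\cite[Rmk.~2.9]{o}) stop at $n\le 8$. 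The duality relations $c(S')c((S')^*)=1$ and $(S')^*\cong S''(1)$, combined with $c_i(S')=c_i(S'')$ for $i\ne 5$, do give inductive formulae for the $c_i$, but they break down exactly in middle degree: at $i=5$ they only determine the sum $c_5(S')+c_5(S'')$, not the individual classes inside the rank-two group $H^{10}(Q_{10})=\Z e_5\oplus\Z e_5'$. Your restriction isomorphism $S'_{10}|_{Q_9}\cong S_9$ cannot separate the two components either, since $e_5$ and $e_5'$ restrict to the same generator of $H^{10}(Q_9)$, and it says nothing at all about $c_{10}$ because $H^{20}(Q_9)=0$.

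This matters because $c_{10}(S')$ is computed from the quadratic term $c_5^2$ (via $e_5^2=(e_5')^2=0$, $e_5e_5'=e_{10}$), and if $c_5=\alpha e_5+\beta e_5'$ then $c_5^2=2\alpha\beta\, e_{10}$ is not a function of $\alpha+\beta$. So with your toolkit you cannot establish the crucial vanishing $c_{10}(S')=c_{10}(S'')=0$ that puts $Q_{10}$ into Table 1. The paper supplies the missing seed by restricting $S'$ and $S''$ not to a sub-quadric but to the maximal linear subspaces $\P^5\subset Q_{10}$: by \cite[Thm.~2.6(ii)]{o} these restrictions split as $\bigoplus_{i=0}^2\Omega_{\P^5}^{2i}(2i)$ and $\bigoplus_{i=0}^2\Omega_{\P^5}^{2i+1}(2i+1)$, interchanged on the two rulings, which pins down $c_1,\dots,c_5$ including $c_5(S')=-244e_5-220e_5'$; the duality relations then propagate this to $c_6,\dots,c_{10}$, yielding $c_9=-176\ne 0$ and $c_{10}=0$, hence $\nu(\mathcal S'_{10})=24$. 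You need this (or an equivalent independent determination of the middle Chern class) to complete part (ii), and with it part (iii) at $n=10$ and the $n=9$ entry.
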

\begin{proof}
Using Lemma \ref{kno2}(i) we have that $\mathcal S$ is big if and only if
$$0 < s_n(\mathcal S^*)=(-1)^ns_n(\mathcal S)=(-1)^nc_n(S)$$ 
and we get (i) for $\mathcal S$. Similarly, (i) holds for $\mathcal S'$ and $\mathcal S''$. 

To see (ii), for $n \ne 7, 9, 10$, just use Lemma \ref{kno2}(ii) and \cite[Rmk.~2.9]{o}. If $n=7$ we have by \cite[Thm.~1.4]{o} that $S = (S'_8)_{|Q_7}$ and picking $H \in |\O_{Q_8}(1)|$ we deduce by \cite[Rmk.~2.9]{o} that
$$c_7(S)=c_7(S'_8) \cdot H=-H^8=-2.$$ 
Thus we get (ii) by Lemma \ref{kno2}(ii). 

If $n=10$, by \cite[Thm.~2.6(ii)]{o}, on any ${\mathbb P}^5$ in one of the families of $5$-planes in $Q_{10}$,  we have that

$$(S')_{|_{{\mathbb P}^5}}=\bigoplus_{i=0}^2 \Omega_{{\mathbb P}^5}^{2i}(2i), \qquad
(S'')_{|_{{\mathbb P}^5}}=\bigoplus_{i=0}^2 \Omega_{{\mathbb P}^5}^{2i+1}(2i+1)$$
and, in the other family
$$(S'')_{|_{{\mathbb P}^5}}=\bigoplus_{i=0}^2 \Omega_{{\mathbb P}^5}^{2i}(2i), \qquad
(S')_{|_{{\mathbb P}^5}}=\bigoplus_{i=0}^2 \Omega_{{\mathbb P}^5}^{2i+1}(2i+1).$$

This allows to compute the first five Chern classes of $S'$ and $S''$. In the  standard basis $\{e_0, \ldots, e_4, e_5, e_5', e_6, \ldots, e_{10}\}$ of the cohomology ring of $Q_{10}$ one gets that for $S'$ (respectively for $S''$): 
$$c_1=-8e_1, c_2=32e_2, c_3=-84e_3, c_4=160e_4, c_5=-244e_5-220e'_5 (\hbox{resp.} \ c_5= -220e_5-244e_5').$$
To get $c_i$, $i>5$, recall that any automorphism of $Q_{10}$ interchanging its two families of $5$-planes provides an isomorphism between $S'$ and $S''$, hence $c_i(S')=c_i(S'')$ for $i \ne 5$. 
From the exact sequence
$$0 \to S' \to \O_{Q_{10}}^{\oplus 2^5} \to S'(1) \to 0$$ 
and the isomorphism $(S')^* \cong S'(1)$  (see \cite[Thm.~2.8]{o}), we get the following relations in the Chern polynomials of $S'$ and $S''$:
\begin{equation}
\label{ott2appchern}
c(S')c((S')^*)=1
\end{equation}
and
\begin{equation}
\label{ott2appchern2}
c((S')^*)=c(S''(1)).
\end{equation}
By (\ref{ott2appchern}) one gets inductive formulae for the even Chern classes: 
$$c_{2k}=(-1)^{k+1}\frac{c_k^2}{2}+\sum_{i=1}^{k-1}(-1)^{k+1}c_ic_{2k-i}.$$
Respectively,  (\ref{ott2appchern2}) leads to inductive formulae for the odd ones:
$$c_{2k+1}=-\frac{1}{2}\sum_{i=0}^{2k}{16-i \choose 2k+1-i}c_ie_1^{2k+1-i}.$$ 
Hence, in coordinates in the standard basis quoted above, and recalling the relations
$e_1^i=2e_i$ for $6 \leq i \leq 10$, $e_5^2=(e_5')^2=0$ and $e_5e_5'=e_{10}$, the Chern classes of $S'$ (resp. $S''$) can be computed to give: 
\begin{equation}
\label{chern}
\begin{aligned}
& c_1=-8, c_2=32, c_3=-84, c_4=160, c_5=(-244,-220) \ \hbox{(resp.} \ c_5= (-220,-244)), \\
& c_6=528, c_7=-484, c_8=352, c_9=-176, c_{10}=0.
\end{aligned}
\end{equation}
If $n=9$, since $(S'_{10})_{|Q_9} \cong S$ by \cite[Thm.~1.4]{o}, we get that $c_9(S)=-176$, thus $\mathcal S$ is big and $\nu(\mathcal S)=24$.

To see (iii) note that Lemma \ref{kno2}(iv) gives that 
$$\E \cong (\mathcal S')^{\oplus a} \oplus (\mathcal S'')^{\oplus b} \ \hbox{when $n$ is even, with} \ a \ge 0, b \ge 0, a+b \ge 1$$ 
and
$$\E \cong \mathcal S^{\oplus a} \ \hbox{when $n$ is odd, with} \ a \ge 1.$$ 
If $n=2$ then $\mathcal S' = \O_{\P^1} \boxtimes \O_{\P^1}(1), \mathcal S''= \O_{\P^1}(1) \boxtimes \O_{\P^1}$ and $\det \E=a\mathcal S'+b\mathcal S'', c_2(\E)=ab$. Hence
$$s_2(\E^*)=c_1(\E)^2-c_2(\E)=(a\mathcal S'+b\mathcal S'')^2-ab=ab.$$
Thus $\E$ is not big if and only if $s_2(\E^*)=0$, that is if and only if $a=0$ or $b=0$. 

If $n=3$ it is proved in \cite[Rmk.~2.7]{lm} that $\E$ is not big if and only if $a=1$. 

If $n = 4$  we need to show that $\E$ is not big if and only if $(a,b) \in \{(0,1),(1,0),(1,1)\}$. We know by (ii) that $\mathcal S'$ and $\mathcal S''$ are not big since their numerical dimension is less than $n+r-1=5$. Consider now $\E = \mathcal S' \oplus \mathcal S''$. By Lemma \ref{big} and Lemma \ref{kno2}(i) we need to prove that $c_i(S')c_{4-i}(S'')=0$ for all $0 \le i \le 4$. Since they have rank $2$ we just need the case $i=2$. By \cite[Rmk.~2.9]{o} we know that $c_2(S')=\ell, c_2(S'')=h^2-\ell$ where $h$ is the class of a hyperplane section and $\ell$ of a plane in $Q_4$, with $h^2\ell=\ell^2=1$. Hence $c_2(S')c_2(S'')=\ell(h^2-\ell)=0$ and $\E$ is not big. Vice versa, to conclude the case $n=4$, using Lemma \ref{big}, it remains to show that $(\mathcal S')^{\oplus 2}$ and $(\mathcal S'')^{\oplus 2}$ are big. This follows as above since $c_2(S')^2=\ell^2=1$ and $c_2(S'')^2=(h^2-\ell)^2=1$.

If $n = 5$ we know by (ii) that $\mathcal S$ is not big. Proceeding as above, we need to show that $\mathcal S^{\oplus 2}$ is big. Now \cite[Rmk.~2.9]{o} gives that $c_2(S)c_3(S)=(2h^2)(-h^3)=-4 \ne 0$, hence $\mathcal S^{\oplus 2}$ is big.

If $n = 6$  we need to show that $\E$ is not big if and only if $(a,b) \in \{(0,1),(1,0),(0,2), (2,0)\}$. To see that these are not big note that the case $(2,0)$ implies the case $(1,0)$ and the case $(0,2)$ implies the case $(0,1)$ by Lemma \ref{big}. Consider $\E = (\mathcal S')^{\oplus 2}$ or $(\mathcal S'')^{\oplus 2}$. By Lemma \ref{big} and Lemma \ref{kno2}(i) we need to prove that $c_i(S'')c_{6-i}(S'')=0$ or $c_i(S')c_{6-i}(S')=0$, for all $0 \le i \le 6$. On the other hand \cite[Rmk.~2.9]{o} gives that $c_i(S')=c_i(S'')=0$ for all $4 \le i \le 6$ and $c_3(S')=-2\ell, c_3(S'')=-2(h^3-\ell)$, where $h$ is the class of a hyperplane section and $\ell$ of a $3$-plane in $Q_6$, with $h^3\ell=1, \ell^2=0$. Hence $c_3(S'')^2=4\ell^2=0, c_3(S'')^2=4(h^3-\ell)^2=0$ and $\E$ is not big. Vice versa, to conclude the case $n=6$, using Lemma \ref{big}, it remains to show that $\mathcal S' \oplus \mathcal S'',  (\mathcal S')^{\oplus 3}$ and $(\mathcal S'')^{\oplus 3}$ are big. Since $c_3(S')c_3(S'')=4\ell (h^3-\ell)=4$ we get, as above, that $\mathcal S' \oplus \mathcal S''$ is big. Now Lemma \ref{kno2}(i) gives
$$s_3((\mathcal S')^{\oplus 2})s_3(\mathcal S')=2(s_3(\mathcal S')+s_1(\mathcal S')s_2(\mathcal S'))c_3(S'')=-8(\ell-3h^3)(h^3-\ell)=16$$
and Lemma \ref{big} implies that $(\mathcal S')^{\oplus 3}$ is big. Similarly, $(\mathcal S'')^{\oplus 3}$ is big.

If $n=7, 8, 9$ the spinor bundles are big by (ii), hence so is $\E$ by Lemma \ref{big}. 

Finally, if $n=10$ (see \eqref{chern}) we have the following values of the Chern classes of $S'$ and $S''$:
$$c_1=-8, c_9=-176 \ \hbox{and} \ c_{10}=0,$$
thus $\mathcal S'$ and $\mathcal S''$ are not big by (i). On the other hand, $c_1(S')c_9(S')=c_1(S'')c_9(S'')=c_1(S')c_9(S'')=c_1(S'')c_9(S')=1408 \ne 0$ so that $\E$ is big if $(a,b) \not\in \{(1,0), (0,1)\}$ by Lemma \ref{big} and Lemma \ref{kno2}(i).
\end{proof}

\section{Behaviour of Ulrich bundles on linear subspaces}

We start by analyzing the behaviour on lines.

\begin{prop} 
\label{rette}
Let $X \subseteq \P^N$ be a smooth irreducible variety of dimension $n \ge 2$. Let $\E$ be a non-big rank $r$ Ulrich vector bundle on $X$. Let $x \in X$ and let
$$h(\E,x) = \max\{h \ge 1 : \exists L \in F_1(X,x) \ \hbox{such that} \ \O_{\P^1}^{\oplus h}  \ \hbox{is a direct summand of} \ \E_{|L}\}.$$
Then
\begin{equation}
\label{stima}
\dim F_1(X,x) + h(\E,x) \ge r.
\end{equation}
Moreover, if $h(\E) = \max\{h(\E,x), x \in X\}$, then
\begin{equation}
\label{stima2}
\nu(\E) \le \dim F_1(X) + h(\E)-1.
\end{equation}
\end{prop}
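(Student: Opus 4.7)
The plan is to construct two incidence varieties relating the Fano variety of lines on $X$ to fibers of $\varphi : \P(\E) \to \P H^0(\E)$. The key external input is \cite[Thm.~2]{ls}: for every $y \in \varphi(\P(\E))$, the image $\Pi_y = \pi(\varphi^{-1}(y))$ is a linear subspace of $\P^N$ contained in $X$, and $\pi$ restricts to an isomorphism $\varphi^{-1}(y) \iso \Pi_y$. Combined with upper semicontinuity of fiber dimension and the non-bigness hypothesis $\nu(\E) < n+r-1$, this yields $\dim \Pi_y \ge d := n+r-1-\nu(\E) \ge 1$ for \emph{every} $y \in \varphi(\P(\E))$.

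The technical core is the following claim: for any $L \in F_1(X,x)$, writing $\E_{|L} \cong \O_L^{\oplus h_L}\oplus \F_L$ with $\F_L$ admitting no trivial summand (so $h_L \le h(\E,x)$), the locus
$$\{y \in P_x : L \subset \Pi_y\} \subset P_x$$
is a linear subspace of dimension $h_L - 1$. To see this, if $L \subset \Pi_y$, then applying Lemma \ref{p} at each $x' \in L$ shows that $\pi^{-1}(x') \cap \varphi^{-1}(y)$ is a single point, and these points glue to a section $\sigma_y : L \to \pi^{-1}(L)$ whose image lies in $\varphi^{-1}(y)$. Since $\O_{\P(\E)}(1)$ is pulled back from $y \in \P H^0(\E)$ along $\varphi^{-1}(y)$, the section $\sigma_y$ corresponds to a trivial rank one quotient $\E_{|L} \twoheadrightarrow \O_L$. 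Any such quotient factors through the maximal trivial summand $\O_L^{\oplus h_L}$, and these are parametrized by $\P H^0(\E_{|L}^*) \cong \P^{h_L-1}$; conversely, each trivial rank one quotient produces via $H^0(\E) \to H^0(\O_L) = \C$ a hyperplane in $H^0(\E)$, i.e.\ a point $y \in P_x$ with $L \subset \Pi_y$.

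For \eqref{stima}, I then consider the incidence
$$I_x = \{(L,y) \in F_1(X,x) \times P_x : L \subset \Pi_y\}.$$
For every $y \in P_x$, Lemma \ref{p} gives $x \in \Pi_y$, and since $\Pi_y$ is a linear subspace of positive dimension through $x$, some line $L \in F_1(X,x)$ satisfies $L \subset \Pi_y$; hence the projection $I_x \to P_x$ is surjective and $\dim I_x \ge r - 1$. By the claim, the fibers of $I_x \to F_1(X,x)$ have dimension at most $h(\E,x) - 1$, yielding $r - 1 \le \dim F_1(X,x) + h(\E,x) - 1$, which is \eqref{stima}.

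For \eqref{stima2}, I would globalize to
$$J = \{(L,y) \in F_1(X) \times \varphi(\P(\E)) : L \subset \Pi_y\}.$$
The projection $J \to \varphi(\P(\E))$ is surjective (each $\Pi_y$ contains lines since $d \ge 1$), with generic fiber $F_1(\Pi_y) \cong F_1(\P^d)$ of dimension $2(d-1)$, so $\dim J \ge \nu(\E) + 2(d-1)$. The projection $J \to F_1(X)$ has fibers of dimension at most $h(\E) - 1$ by the claim (applied with any $x \in L$). Combining yields $\nu(\E) \le \dim F_1(X) + h(\E) + 1 - 2d \le \dim F_1(X) + h(\E) - 1$ since $d \ge 1$. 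The most delicate step I expect is the key claim, and in particular verifying that every $y$ with $L \subset \Pi_y$ corresponds to a trivial rank one quotient supported on the trivial summand of $\E_{|L}$; this hinges on the Grothendieck convention for $\P(\E)$ and on the triviality of $\O_{\P(\E)}(1)$ along each fiber of $\varphi$.
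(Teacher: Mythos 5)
Your argument is correct and follows essentially the same route as the paper: both proofs hinge on identifying, for a line $L$ with $h_L\ge 1$, the locus $\{y : L\subseteq \Pi_y\}$ with the vertex $V_L\cong\P^{h_L-1}$ of the scroll $\varphi(\P(\E_{|L}))$ (using the surjectivity of $H^0(\E)\to H^0(\E_{|L})$ from \cite[Lemma 3.3]{ls}), and then run a dimension count on an incidence correspondence between lines and points of $P_x$, respectively of $\varphi(\P(\E))$. The only substantive difference is that you obtain surjectivity onto $P_x$ by invoking the linearity of $\Pi_y$ from \cite[Thm.~2]{ls} together with the bound $\dim\Pi_y\ge n+r-1-\nu(\E)\ge 1$, whereas the paper argues more directly, producing the line $L=\langle x,x'\rangle$ from two points of a positive-dimensional fiber of $\varphi$ via \cite[Lemma 3.2]{ls}; both are valid, and your global incidence (taking all of $F_1(\Pi_y)$ as the fiber over $y$) even yields a slightly sharper form of \eqref{stima2}.
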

\begin{proof}
Set $\varphi = \varphi_{\E}$. We will use the fact, following by \cite[Lemma 3.3]{ls}, that for any line $L \subset X$, we have that $H^0(\E) \to H^0(\E_{|L})$ is surjective, hence the restriction of $\varphi$ to $\P(\E_{|L})$ is the tautological morphism $\varphi_{(\E_{|L})}$ associated to $\O_{\P(\E_{|L})}(1)$.

For any $L \in F_1(X,x)$ we can write
$$\E_{|L} \cong \O_{\P^1}^{\oplus h_L} \oplus \O_{\P^1}(a_{1,L}) \oplus \ldots \oplus \O_{\P^1}(a_{r-h_L,L})$$
for some integers $0 \le h_L \le r$ and $a_{i,L} \ge 1$ for $1 \le i \le r-h_L$. Suppose that $h_L \ge 1$. Then $\varphi(\P(\E_{|L})) \subseteq \P H^0(\E_{|L})$ is either a rational normal scroll with vertex $V_L \cong \P^{h_L-1}$ when $h_L \le r-1$ (including the case $h_L=r-1, a_{1,L}=1$, in which $\varphi(\P(\E_{|L}))=\P H^0(\E_{|L})$) or $\varphi(\P(\E_{|L}))=\P^{r-1}$ when $h_L = r$. In the latter case we define $V_L = \varphi(\P(\E_{|L}))$. Moreover $\P(\O_{\P^1}^{\oplus h_L}) \subseteq \P(\E_{|L})$ and if we call $\psi_L : \P(\O_{\P^1}^{\oplus h_L}) \to V_L$ the map associated to the tautological line bundle on $\P(\O_{\P^1}^{\oplus h_L})$, the fibers of $\psi_L$ are the curves $\psi_L^{-1}(y), y \in V_L$. Note that they all intersect $\P(\E_x)$ in one point.

Consider the incidence correspondence
$$\I = \{ (z, L) \in \P(\E_x) \times F_1(X,x) : h_L \ge 1 \ \hbox{and} \ \exists y \in V_L \ \hbox{with} \ z \in \psi_L^{-1}(y) \cap \P(\E_x) \}$$
together with its projections 
$$\xymatrix{& \I \ \ \ar[dl]_{p_1} \ar[dr]^{p_2} & \\ \ \ \ \ \ \P(\E_x) & & \ F_1(X,x) \ \ \ .}$$
We claim that $p_1$ is surjective. 
 
Let $z \in \P(\E_x)$, so that $x = \pi(z)$. Let $y=\varphi(z)$. Since $\E$ is not big, we have that $\dim \varphi(\P(\E)) < \dim \P(\E)$, hence all fibers of $\varphi$ are positive dimensional. Thus $\dim \varphi^{-1}(y) > 0$, hence we can find $z' \in \varphi^{-1}(y)$ with $z' \ne z$. Let $x' = \pi(z')$. Note that $x \ne x'$, because otherwise $z, z' \in \P(\E_x)$, contradicting the fact that $\varphi_{|\P(\E_x)}$ is an embedding. Consider the line $L = \langle x,x' \rangle$. If $L \not\subset X$, then \cite[Lemma 3.2]{ls} implies that $P_x \cap P_{x'} = \emptyset$. On the other hand, since $z, z' \in \varphi^{-1}(y)$, we have that $x, x' \in \Pi_y$. Therefore Lemma \ref{p} implies that $y \in P_x \cap P_{x'}$, a contradiction. Thus we have proved that $L \subset X$, that is $L \in F_1(X,x)$. 

Since $z$ and $z'$ are not separated by $\varphi_{(\E_{|L})}$, we have that $h_L \ge 1, z \in \P(\O_{\P^1}^{\oplus h_L})$ and $y=\varphi(z)=\psi_L(z) \in V_L$. Therefore $z \in \psi_L^{-1}(y) \cap \P(\E_x)$ and then $(z, L) \in \I$, giving that $z \in \Im p_1$.  Hence $p_1$ is surjective, $h(\E,x)$ is well defined and let us see that
\begin{equation}
\label{vert}
\varphi(\P(\E))=\bigcup\limits_{L \in F_1(X): h_L \ge 1} V_L.
\end{equation}
In fact, on one side, the inclusion $V_L \subseteq \varphi(\P(\E))$ follows by definition. Now let $y \in \varphi(\P(\E))$. Then there is $z \in \P(\E)$ such that $y=\varphi(z)$ and set $x = \pi(z)$. Then $z \in \P(\E_x)$ and the surjectivity of $p_1$ implies that there is a line $L \in F_1(X,x)$ such that $(z, L) \in \I$, hence $h_L \ge 1$ and $y = \varphi(z) = \psi_L(z) \in V_L$. This proves \eqref{vert}.

Now observe that the nonempty fibers of $p_2$ are all isomorphic to $V_L$ for some $L \in F_1(X,x)$. In fact, if $L \in F_1(X,x)$ is such that $p_2^{-1}(L) \ne \emptyset$, then there is $z_0 \in \P(\E_x)$ such that $(z_0, L) \in \I$, hence $h_L \ge 1$. Then, for any $(z, L) \in p_2^{-1}(L)$, there exists $y \in V_L$ with $z \in \psi_L^{-1}(y) \cap \P(\E_x)$, that is $\psi_L(z)=y \in V_L$. This defines a morphism $f: p_2^{-1}(L) \to V_L$ by $f((z, L))=\psi_L(z)$. We have that $f$ is injective since all the fibers $\psi_L^{-1}(y), y \in V_L$ intersect $\P(\E_x)$ in one point. Also, if $y \in V_L$, let $\{z\}=\psi_L^{-1}(y) \cap \P(\E_x)$. Then $(z, L) \in p_2^{-1}(L)$ and $y=\psi_L(z)$. Thus $f$ is also surjective.

Let $W$ be an irreducible component of $\I$ such that $W$ dominates $\P(\E_x)$. We have, for a general $L \in p_2(W)$,
$$\dim W = \dim p_2(W)+ h_L - 1 \le \dim F_1(X,x) + h(\E,x)-1.$$ Therefore we deduce that
$$r-1 = \dim \P(\E_x) \le \dim W \le \dim F_1(X,x) + h(\E,x)-1$$
and \eqref{stima} holds. Now consider the incidence correspondence
$$\I' = \{ (y, L) \in \P H^0(\E) \times F_1(X) : h_L \ge 1 \ \hbox{and} \ y \in V_L\}$$
together with its projections 
$$\xymatrix{& \I' \ \ \ar[dl]_{p_1'} \ar[dr]^{p_2'} & \\ \ \ \ \ \ \P H^0(\E) & & \ F_1(X) \ \ \ .}$$
Then $\Im p_1' = \varphi(\P(\E))$ by \eqref{vert}. Also, the nonempty fibers of $p_2'$ are isomorphic to $V_L \cong \P^{h_L-1}$, since if $(p_2')^{-1}(L) \ne \emptyset$ then $h_L \ge 1$ and $(p_2')^{-1}(L)=\{(y, L): y \in V_L\} \cong V_L$. Therefore, choosing an irreducible component $W'$ of $\I'$ such that $W'$ dominates $\varphi(\P(\E))$, we get, for a general $L \in p_2(W')$, that
$$\nu(\E) = \dim \varphi(\P(\E)) \le \dim W' \le \dim F_1(X) + h_L-1 \le \dim F_1(X) + h(\E)-1$$
and we get \eqref{stima2}.
\end{proof}

We can now prove our first theorem.

\renewcommand{\proofname}{Proof of Theorem \ref{main1}}

\begin{proof}
The vector bundles in Table 1 are Ulrich and non-big by Proposition \ref{big2}(iii). Vice versa, by the same proposition, we can assume that $n \ge 11$. Let $\E$ be a spinor bundle on $Q_n$. Note that $r = 2^{\lfloor \frac{n-1}{2} \rfloor}$ and $h(\E,x)=2^{\lfloor \frac{n-3}{2} \rfloor}$ by \cite[Cor.~1.6]{o} for any $x \in Q_n$. We claim that $\E$ is big. In fact, if not, we get by Proposition \ref{rette} that 
$$n - 2 + 2^{\lfloor \frac{n-3}{2} \rfloor} \ge 2^{\lfloor \frac{n-1}{2} \rfloor}$$
contradicting $n \ge 11$. Now just apply Lemma \ref{big} together with Lemma \ref{kno2}(iv) to get that any Ulrich vector bundle on $Q_n, n \ge 11$ is big.
\end{proof}
\renewcommand{\proofname}{Proof}

\begin{remark}
\label{nd}
It follows by Theorem \ref{main1} that the numerical dimension of any rank $r$ Ulrich vector bundle $\E$ on $Q_n$ is known. In fact, if $\E$ does not belong to Table 1, then $\nu(\E)=n+r-1$. Now suppose that $\E$ is as in Table 1. Then $\nu(\E)$ is given in Proposition \ref{big2}(ii) if $\E$ is a spinor, $\nu(\E)=n+r-2$ if $\E$ is not a spinor.
\end{remark}

\begin{remark}
The example $\E = \mathcal S' \oplus \mathcal S''$ on $Q_4$ shows that, even when $\Pic(X) \cong \Z$, the fibers of $\varphi$ can have different dimensions. In fact, $\nu(\E)=6$ so that a general fiber is $1$-dimensional, while $\nu(\mathcal S')=3$ hence the fibers over points in $\P(\mathcal S')$ are $2$-dimensional.
\end{remark}

Under a suitable hypothesis, we will now study Ulrich vector bundles with minimal numerical dimension on linear $\P^k$-bundles and determine their restriction to fibers in the next case. We will use the notation in Definition \ref{not}. 

The following lemma allows us to identify $\E_{|f}$ for all $f$, rather than on a general $f$, thus giving a better description of $\E$. This is needed, in the present paper, in Theorem \ref{main2}(i2) and in the proof of \cite[Thm.~1]{lms} (for example in case (xii)).

\begin{lemma}
\label{nu=b+r}
Let $(X,\O_X(1))=(\P(\F), \O_{\P(\F)}(1))$, where $\F$ is a rank $n-b+1$ very ample vector bundle over a smooth irreducible variety $B$ of dimension $b$ with $1 \le b \le n-1$. Let $\E$ be a rank $r$ Ulrich vector bundle on $X$, let $p : X \to B$ be the projection morphism and suppose that
\begin{equation}
\label{cos}
p_{|\Pi_y} : \Pi_y \to B \ \hbox{is constant for every} \ y \in \varphi(\P(\E)).
\end{equation}
Then, for every fiber $f$ of $p$ we have $\nu(\E)=b+\dim \varphi(\pi^{-1}(f)) \ge b+r-1$. Moreover we have the following two extremal cases:
\begin{itemize}
\item [(i)] If $\nu(\E) = b+r-1$ there is a rank $r$ vector bundle $\G$ on $B$ such that $\E \cong p^*(\G(\det \F))$ and
$H^j(\G \otimes S^k \F^*)=0$ for all $j \ge 0, 0 \le k \le b-1$. 
\item [(ii)] If $\nu(\E) = b+r$ then either $b=n-1$ and $\E$ is big or $b \le n-2$ and $\E_{|f} \cong T_{\P^{n-b}}(-1)\oplus \O_{\P^{n-b}}^{\oplus (r-n+b)}$ for any fiber $f=\P^{n-b}$ of $p$.
\end{itemize}
\end{lemma}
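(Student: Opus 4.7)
The plan is to use the hypothesis \eqref{cos} to factor $\varphi=\varphi_\E$ through a morphism $\tau: V \to B$, identify $\varphi(\pi^{-1}(f))$ as the fiber of $\tau$ over $p(f)$, and then classify each restriction $\E_{|f}$ in the two extremal cases---via cohomology and base change in case (i), and via Sato's classification in case (ii).

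I would begin by setting $V = \varphi(\P(\E))$, so $\dim V = \nu(\E)$. The hypothesis \eqref{cos} says $p \circ \pi$ is constant on every fiber of $\varphi$; passing to the Stein factorization of $\varphi$ and invoking the rigidity lemma, this produces a surjective morphism $\tau: V \to B$ with $\tau \circ \varphi = p \circ \pi$, whose fiber over $p(f)$ is exactly $\varphi(\pi^{-1}(f))$. Since $P_x \subseteq \varphi(\pi^{-1}(f))$ for every $x \in f$ and $\dim P_x = r-1$, each fiber of $\tau$ has dimension at least $r-1$, giving $\nu(\E) \ge b+r-1$. To upgrade this to the equality $\nu(\E) = b + \dim \varphi(\pi^{-1}(f))$ for \emph{every} fiber $f$, I would use that $\varphi(\pi^{-1}(f))$ is the image of $\P(\E_{|f})$ under the linear subsystem $\Img(H^0(\E) \to H^0(\E_{|f}))$, so that $\dim \varphi(\pi^{-1}(f)) \le \nu(\E_{|f})$, together with the observation that $\nu(\E_{|f})$ is locally constant in $f$ (being determined by the Chern classes of $\E_{|f}$, which are discrete invariants of a flat family on the connected base $B$). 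Combining this upper bound with upper semicontinuity of fiber dimension of $\tau$ and the generic equality $\dim \varphi(\pi^{-1}(f)) = \nu(\E) - b$, all three quantities $\dim \varphi(\pi^{-1}(f))$, $\nu(\E_{|f})$, and $\nu(\E) - b$ must agree for every $f$.

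For (i), equality $\nu(\E) = b+r-1$ forces $\nu(\E_{|f}) = r-1$, which by Lemma \ref{bf} means $s_k(\E_{|f})=0$ for all $k \ge 1$, equivalently all Chern classes $c_k(\E_{|f})$ vanish; since $\E_{|f}$ is globally generated (as $\E$ is Ulrich) and hence nef, a nef bundle on $\P^{n-b}$ with trivial Chern classes is numerically flat, hence trivial. Thus $\E_{|f} \cong \O_{\P^{n-b}}^{\oplus r}$ for every $f$. Cohomology and base change then gives that $\G' := p_*\E$ is locally free of rank $r$ with $R^j p_*\E = 0$ for $j \ge 1$, and that the natural map $p^*\G' \to \E$ is an isomorphism. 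Setting $\G := \G' \otimes (\det \F)^{-1}$, the standard computation of $R^j p_*\O_{\P(\F)}(-p)$ on a projective bundle (vanishing for $1 \le p \le n-b$, and equal to $S^{p-n+b-1}\F^* \otimes (\det\F)^{-1}$ in degree $j=n-b$ for $n-b+1 \le p \le n$), combined with Leray and the projection formula, translates the Ulrich vanishings $H^i(\E(-p))=0$ into exactly $H^j(\G \otimes S^k\F^*)=0$ for all $j \ge 0$ and $0 \le k \le b-1$.

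For (ii), we have $\nu(\E_{|f}) = r$ on $\P^{n-b}$ for every $f$. If $b=n-1$, then $\nu(\E) = n+r-1 = \dim \P(\E)$, so $\E$ is big. If $b \le n-2$, so that $n-b \ge 2$, the tautological morphism of $\E_{|f}$ has fibers of dimension $n-b-1 \ge (n-b)/2$, placing us in the regime where Sato's classification \cite{sa2} applies and forces $\E_{|f} \cong T_{\P^{n-b}}(-1) \oplus \O_{\P^{n-b}}^{\oplus(r-n+b)}$, as desired. The main technical obstacle in the whole argument is the upgrade in the second paragraph from the generic-fiber statement to the every-fiber statement, which is what allows the fiberwise classifications of $\E_{|f}$ in (i) and (ii) to hold uniformly on $B$ rather than only over a dense open subset.
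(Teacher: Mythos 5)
Your first two paragraphs and your treatment of case (i) are essentially sound. The identification $\dim\varphi(\pi^{-1}(f))=\nu(\E)-b$ for \emph{every} fiber is obtained in the paper by a slightly different bookkeeping (an incidence correspondence $\{(y,v): y\in\varphi(\pi^{-1}(f_v))\}$ whose first projection is bijective by the disjointness of the $P_x$ over distinct fibers, plus the reverse inequality $\dim\varphi(\pi^{-1}(f_v))\le \nu(\E)-b$ coming from $\varphi^{-1}(y)\subseteq\pi^{-1}(f_v)$), but your route via the constancy of $\nu(\E_{|f})$ in the flat family reaches the same conclusion. In case (i) the paper only extracts that $\det(\E)_{|f}$ is trivial, deduces $\det\E=p^*M$, and quotes \cite[Lemmas 5.1 and 4.1]{lo}; your more self-contained argument ($\E_{|f}$ trivial, base change, Leray) is a legitimate re-derivation of that cited lemma.

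Case (ii), however, has a genuine gap, and it sits exactly where the real work of the proof is. Sato's theorem \cite{sa2} classifies polarized \emph{manifolds} swept out by large-dimensional linear spaces; at best it could tell you something about the image variety $\varphi(\pi^{-1}(f))$ (which the paper identifies directly as $\P^r$, being an $r$-dimensional variety swept by an $(n-b)$-dimensional family of $\P^{r-1}$'s with $n-b\ge 2$). It says nothing about the isomorphism class of the \emph{bundle} $\E_{|f}$, and "globally generated of rank $r$ on $\P^{n-b}$ with $\nu=r$" does not by itself single out $T_{\P^{n-b}}(-1)\oplus\O_{\P^{n-b}}^{\oplus(r-n+b)}$: you have not determined $c_1(\E_{|f})$, nor shown $\E_{|f}$ is uniform, nor excluded competing candidates. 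The paper's argument proceeds by: (a) $\varphi(\pi^{-1}(f))=\P^r$, hence $\E_{|f}$ is generated by $r+1$ sections, giving $0\to\O_f(-a)\to\O_f^{\oplus(r+1)}\to\E_{|f}\to 0$ and in particular $H^0(\E_{|f}(-1))=0$; (b) the crucial degree computation $a=\deg\varphi(\pi^{-1}(L))=\deg\P^r=1$ on a suitable line $L\subset f$, which uses the Ulrich-specific surjectivity of $H^0(\E)\to H^0(\E_{|L})$ from \cite[Lemma 3.2]{ls} so that $\varphi_{|\pi^{-1}(L)}=\varphi_{\E_{|L}}$; (c) uniformity of splitting type $(1,0,\dots,0)$ on all lines of $f$, and Ellia's classification \cite[Prop.~IV.2.2]{e}, with two of the three resulting candidates ruled out by $H^0(\E_{|f}(-1))=0$ and by comparing first Chern classes. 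None of steps (a)--(c) is present in your sketch, and they cannot be replaced by a citation of \cite{sa2}; in particular your closing remark that the only technical obstacle is the generic-to-every-fiber upgrade is misplaced --- that part is the easy half.
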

\begin{proof}
Set $f_v = p^{-1}(v), v \in B$. Let $x, x'\in X$ be such that $p(x) \ne p(x')$. If there exists an $y \in P_x \cap P_{x'}$, then $x, x' \in \Pi_y$ by Lemma \ref{p}, contradicting \eqref{cos}. Therefore $P_x \cap P_{x'} = \emptyset$.
It follows that 
\begin{equation}
\label{disg}
\varphi(\P(\E))=\bigsqcup_{v \in B}\varphi(\pi^{-1}(f_v)).
\end{equation}
Consider the incidence correspondence
$$\I=\{(y, v) \in \varphi(\P(\E)) \times B : y \in \varphi(\pi^{-1}(f_v))\}$$
together with its two projections $p_1 : \I \to \varphi(\P(\E))$ and $p_2 : \I \to B$. Then \eqref{disg} implies that $p_1$ is bijective, hence $\I$ is irreducible and $\dim \I = \dim \varphi(\P(\E)) = \nu(\E)$. Since $p_2$ is surjective, it follows that for any $v \in B$ we have that 
$$\dim \varphi(\pi^{-1}(f_v)) \ge \nu(\E)-b.$$ 
On the other hand, for every $y \in \varphi(\pi^{-1}(f_v))$ there is a $z \in \pi^{-1}(f_v)$ such that $y=\varphi(z)$, hence $\pi(z) \in \Pi_y \cap f_v$ and \eqref{cos} gives that $p(\Pi_y)=\{v\}$. Hence $\Pi_y \subseteq f_v$ and therefore $\varphi^{-1}(y) \subseteq \pi^{-1}(f_v)$. Picking a general $y \in \varphi(\pi^{-1}(f_v))$ we deduce that
$$\dim \varphi(\pi^{-1}(f_v)) = \dim \pi^{-1}(f_v) - \dim \varphi^{-1}(y) \le n-b+r-1 - (n+r-1-\nu(\E))=\nu(\E)-b.$$
Therefore $\dim \varphi(\pi^{-1}(f)) = \nu(\E)-b$ for every fiber $f$ of $p : X \to B$. In particular, being $\varphi(\pi^{-1}(f_v))$ union of linear spaces $P_x=\P^{r-1}$ for $x \in f_v$, we see that $\nu(\E)\ge b+r-1$.

Now consider the morphism $\Phi_{|f} : f=\P^{n-b} \to  {\mathbb G}(r-1, \P H^0(\E))$. Observe that $\Phi_{|f} $ is constant if and only if $P_x=P_{x'}$ for any $x, x' \in f$, that is if and only if $\varphi(\pi^{-1}(f)) = P_x = \P^{r-1}$, or, equivalently, if and only if $\nu(\E) = b+r-1$. Hence, when $\nu(\E) = b+r-1$, we get that $\det(\E)_{|f}$ is trivial. This implies that there is a globally generated line bundle $M$ on $B$ such that $\det \E = p^*M$ and therefore $\E$ is as in (i) by \cite[Lemmas 5.1 and 4.1]{lo}. 

On the other hand, if $\nu(\E) = b+r$, then $\Phi_{|f}$ is finite-to-one onto its image. If $b=n-1$ then $\nu(\E) = n+r-1$ and $\E$ is big. If $b \le n-2$ then $\varphi(\pi^{-1}(f)) \subseteq \P H^0(\E)$ is swept out by a family $\{P_x, x \in f\}$ of dimension $n-b \ge 2$ of linear $\P^{r-1}$'s. Therefore
\begin{equation}
\label{pro}
\varphi(\pi^{-1}(f))=\P^r \ \hbox{for every fiber} \ f \ \hbox{of} \ p:X\to B.
\end{equation}
This gives that $\E_{|f}$ can be generated by $r+1$ global sections and we get an exact sequence 
\begin{equation}
\label{ex}
0 \to \O_f(-a) \to \O_f^{\oplus (r+1)} \to \E_{|f}\to 0
\end{equation} 
where $c_1(\E_{|f})=\O_f(a)$. Note that $H^1(\O_f(-a-1))=0$ since $f=\P^{n-b}, n-b \ge 2$ and then \eqref{ex} implies that 
\begin{equation}
\label{ex2}
H^0(\E_{|f}(-1))=0. 
\end{equation} 
Since $\varphi(\pi^{-1}(f))=\P^r$, there must be two points $x, x' \in f$ such that $P_x \ne P_{x'}$ and \eqref{pro} gives that $\varphi(\pi^{-1}(L))=\P^r$, where $L$ is the line in $f$ joining $x$ and $x'$. Now \cite[Lemma 3.2]{ls} gives that $H^0(\E) \to H^0(\E_{|L})$ is surjective, hence $\varphi_{|\pi^{-1}(L)} = \varphi_{\E_{|L}}$. Since $c_1(\E_{|L})=\O_L(a)$ we get that
$$a = \deg \varphi_{\E_{|L}}(\pi^{-1}(L)) = \deg \varphi(\pi^{-1}(L))= \deg \P^r=1$$
where the degrees are meant as subvarieties of $\P H^0(\E)$.

Now for any line $L' \subset f$ we have that $c_1(\E_{|L'})=\O_{L'}(1)$, hence, being $\E$ globally generated, we get that $(\E_{|f})_{|L'} \cong \O_{\P^1}(1) \oplus \O_{\P^1}^{\oplus (r-1)}$. Then \cite[Prop.~IV.2.2]{e} implies that $\E_{|f} \cong T_{\P^{n-b}}(-1) \oplus \O_{\P^{n-b}}^{\oplus (r-n+b)}$ or $\O_{\P^{n-b}}(1) \oplus \O_{\P^{n-b}}^{\oplus (r-1)}$ or $\Omega_{\P^{n-b}}(2) \oplus \O_{\P^{n-b}}(1)^{\oplus (r-n+b)}$. Now the second case is excluded by \eqref{ex2}. On the other hand, if the third case occurs, then $\O_f(1) = c_1(\E_{|f})=\O_f(r-1)$, hence $2 = r \ge n-b \ge 2$, therefore equality holds, and this is also the first case. Thus $\E$ is as in (ii).
\end{proof}

We now give an example showing that the restriction of $\E$ in Lemma \ref{nu=b+r}(ii) actually occurs. For an example with $b = 2$ see \cite[Ex.~5.11]{lms}.

\begin{example}
\label{un}

Let $n \ge 3$, let $X = \P^1 \times \P^{n-1}$ and let $\O_X(1)=\O_{\P^1}(1) \boxtimes \O_{\P^{n-1}}(1)$. It is easily seen that, for every $r \ge n-1$, the vector bundle
$$\E =  [\O_{\P^1}(n-2) \boxtimes T_{\P^{n-1}}(-1)] \oplus  [\O_{\P^1}(n-1) \boxtimes \O_{\P^{n-1}}]^{\oplus (r-n+1)}$$
is Ulrich, $c_1(\E)^n > 0, \nu(\E)=r+1$ and $\E_{|f} = T_{\P^{n-1}}(-1) \oplus  \O_{\P^{n-1}}^{\oplus (r-n+1)}$ of any fiber $f$ of the first projection $p: X \to \P^1$.
\end{example}

A first use of Lemma \ref{nu=b+r} is the following.

\renewcommand{\proofname}{Proof of Theorem \ref{p2xp2}}
\begin{proof}
If $c_1(\E)^4=0$ we have by \cite[Cor.~4.9]{ls} that $\E \cong p^*(\O_{\P^2}(2))^{\oplus r}$, where $p : \P^2 \times \P^2 \to \P^2$ is one of the two projections. 

Assume now that $c_1(\E)^4>0$. First, we claim that we can apply Lemma \ref{nu=b+r}(ii) with one of the two projections $p : \P^2 \times \P^2 \to \P^2$ or $q : \P^2 \times \P^2 \to \P^2$. In fact, \cite[Cor.~2]{ls} implies that $r+3-\nu(\E) = 1$,
so that $\nu(\E) = r+2$. Also, for any $y \in \varphi(\P(\E))$, we have that $\Pi_y$ is a linear subspace in $\P^8$ of dimension
$k \ge r+3-\nu(\E)=1$, hence, as is well known, $\Pi_y$ is contained in a fiber of $p$ or in a fiber of $q$. Let $U \subseteq \varphi(\P(\E))$ be the non-empty open subset on which $\dim \varphi^{-1}(y)=1$ for every $y \in U$. This gives a morphism $\gamma : U \to F_1(X)$ defined by $\gamma(y)=\Pi_y$. Since $F_1(X)$ has two irreducible disjoint components, namely $W_p$, the lines contained in a fiber of $p$ and $W_q$ the lines contained in a fiber of $q$, we get that either $\gamma(U) \subseteq W_p$ or $\gamma(U) \subseteq W_q$. In the first case, by specialization, we deduce that $\Pi_y$ is contained in a fiber of $p$ for every $y \in \varphi(\P(\E))$. In the second case the same happens for $q$.
Therefore we can assume that \eqref{cos} holds for $p$. Then Lemma \ref{nu=b+r}(ii) applies to $p$ and we get that 
\begin{equation}
\label{restr}
\E_{|f} \cong T_{\P^2}(-1)\oplus \O_{\P^2}^{\oplus (r-2)}
\end{equation} 
on any fiber $f \cong \P^2$ of $p$. Let $A = p^*(\O_{\P^2}(1))$ and  $B = q^*(\O_{\P^2}(1))$, so that we can write 
$$\det \E = \alpha A + \beta B, c_2(\E) = \gamma A^2 + \delta A \cdot B + \epsilon B^2$$
for some $\alpha, \beta, \gamma, \delta, \epsilon \in \Z$. Now \eqref{restr} implies that $\alpha=\gamma=1$. Moreover, if $C$ is the curve section of $X \subset \P^8$, then $C \subset \P^5$ is an elliptic curve of degree $6$ and $\E_{|C}$ is a rank $r$ Ulrich vector bundle on $C$ by Remark \ref{gen}(ii). Hence $\chi(\E_{|C}(-1))=0$, that is $\deg(\E_{|C})=6r$ and we get
$$3 + 3 \beta = (A+\beta B)(A+B)^3 = c_1(\E_{|C}) = 6r$$
that is $\beta=2r-1$ and this gives
\begin{equation}
\label{classi}
\det \E = A + (2r-1) B, c_2(\E) = A^2 + \delta AB + \epsilon B^2.
\end{equation}
Let now $Y$ be the hyperplane section of $X$, so that $\E_{|Y}$ is a rank $r$ Ulrich vector bundle on $Y$ by Remark \ref{gen}(ii). By \cite[Thm.~5.1]{ma}, setting $p_1 = p_{|Y} : Y \to \P^2, p_2 = q_{|Y} : Y \to \P^2$ and $\mathcal G_i = p_i^*(\Omega_{\P^2}(1)), i=1,2$, there is a resolution
\begin{equation}
\label{riso}
0 \to p_1^*(\O_{\P^2}(1))^{\oplus d} \oplus p_2^*(\O_{\P^2}(1))^{\oplus c} \to \mathcal G_1(1)^{\oplus b} \oplus \mathcal G_2(1)^{\oplus a} \to \E_{|Y} \to 0.
\end{equation}
Computing rank and the first Chern class in \eqref{riso} and using \eqref{classi} we get the three equations
$$2a+2b=c+d+r, 1=b+2a-d, 2r-1=2b+a-c$$
which imply that $a+c=1$, thus giving the only possible solutions $(a,b,c,d)=(0,r,1,r-1)$ or $(1,r-1,0,r)$. Now computing the second Chern class in \eqref{riso} and using \eqref{classi} we have two possibilities. In the first case we get that $r=1$, a contradiction since $\E$ is not big. In the second case we get that $r=2, \det \E = A + 3B$ and $c_2(\E) = A^2 + AB + 4B^2$, but then $s_4(\E^*)=6>0$, a contradiction since $\E$ is not big.
\end{proof}
\renewcommand{\proofname}{Proof}

Next we prove a useful result that will later allow to give an upper bound on the rank. We will use the notation in Definition \ref{not}. 

\begin{prop} 
\label{fam1}
Let $X \subseteq \P^N$ be a smooth irreducible variety of dimension $n \ge 2$ and let $\E$ be a rank $r$ Ulrich vector bundle on $X$ such that $\E$ is not big and $c_1(\E)^n>0$. Assume that for a general point $x \in X$ we have that $F_{n+r-\nu(\E)}(X,x) = \emptyset$. Then there is a morphism, finite onto its image, 
$$\psi: \P^{r-1} \cong P_x \to F_{n+r-1-\nu(\E)}(X,x)$$ 
and
$$\dim F_{n+r-1-\nu(\E)}(X,x) \ge r-1.$$
\end{prop}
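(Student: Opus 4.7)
Set $k := n+r-1-\nu(\E)$. The plan is to send $y\in P_x$ to $[\Pi_y]\in F_k(X,x)$ and to deduce $\dim F_k(X,x)\ge r-1$ from finiteness of the resulting $\psi$. I would first invoke the structural content of \cite[Thm.~2]{ls}: for every $y\in\varphi(\P(\E))$, $\Pi_y$ is a linear subspace of $\P^N$ contained in $X$. Since $\varphi$ restricts to an embedding on each $\P(\E_{x'})$, the map $\pi|_{\varphi^{-1}(y)}\colon\varphi^{-1}(y)\to\Pi_y$ is a finite bijection onto the smooth $\Pi_y$, hence an isomorphism; by semicontinuity $\dim\Pi_y=\dim\varphi^{-1}(y)\ge k$ for every $y$ in the image.

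Now fix $x$ general with $F_{k+1}(X,x)=\emptyset$; since any linear subspace of $X$ through $x$ of dimension at least $k+1$ contains a $(k+1)$-plane through $x$, this yields $F_j(X,x)=\emptyset$ for all $j\ge k+1$. For $y\in P_x$, Lemma~\ref{p} gives $x\in\Pi_y$, hence $\dim\Pi_y\le k$, and combined with the lower bound, $\dim\Pi_y=k$. The equidimensional fibres of $\varphi$ over $P_x$ form a flat family whose classifying map to the Hilbert scheme of linear $k$-planes lands in $F_k(X,x)$, giving the morphism $\psi\colon P_x\to F_k(X,x)$, $y\mapsto[\Pi_y]$.

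For the finiteness of $\psi$ onto its image I would argue by contradiction: assume $D:=\psi^{-1}([\Pi])$ contains an irreducible curve. Then $\Pi_y=\Pi$ for every $y\in D$, so Lemma~\ref{p} gives $D\subseteq P_{x'}$ for every $x'\in\Pi$. Each $y\in D$ yields a section $\sigma_y\colon\Pi\to\P(\E|_\Pi)$ of $\pi$ via $\pi|_{\varphi^{-1}(y)}\colon\varphi^{-1}(y)\iso\Pi$, and since $\varphi\circ\sigma_y\equiv y$ is constant, $\sigma_y^*\O_{\P(\E)}(1)\cong\O_\Pi$; thus $\sigma_y$ corresponds to a surjection $\E|_\Pi\twoheadrightarrow\O_\Pi$. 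Distinct $y\in D$ give distinct $\sigma_y$ (because $\sigma_y(x)=y$), hence distinct surjections up to scalars, giving an injection $D\hookrightarrow\P H^0((\E|_\Pi)^*)$ and forcing $h^0((\E|_\Pi)^*)\ge 2$. The hypothesis $c_1(\E)^n>0$ is equivalent to bigness of $\det\E=\Phi_\E^*\O_{\mathbb G}(1)$, so $\Phi_\E$ is generically finite on $X$ and $\Phi_\E^{-1}(\Phi_\E(x))$ is zero-dimensional at our general $x$. I would then exploit the positive-dimensional family of trivial quotients $\{\mathcal L_y\}_{y\in D}$ to show that $\Phi_\E$ contracts a positive-dimensional subvariety of $\Pi$ through $x$: in the extreme case $\E|_\Pi\cong\O_\Pi^{\oplus r}$, the evaluations $H^0(\E)\to\E_{x'}$ for $x'\in\Pi$ all share the same kernel, giving $P_{x'}=P_x$ identically, and hence $\Pi\subseteq\Phi_\E^{-1}(\Phi_\E(x))$, contradicting $\dim\Pi=k\ge 1$ (note $k\ge 1$ since $\E$ is not big). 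The main technical obstacle is the intermediate regime in which $\E|_\Pi$ admits only a proper trivial direct summand rather than being fully trivial; I would address this by restricting $\psi$ to the sub-$\P^{s-1}\subseteq P_x$ coming from the trivial summand of rank $s$ and iterating, exploiting that the vanishing $F_{k+1}(X,x)=\emptyset$ persists in the reduced setup. Once $\psi$ is finite onto its image, $\dim\psi(P_x)=\dim P_x=r-1$ yields $\dim F_k(X,x)\ge r-1$.
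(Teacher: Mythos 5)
Your setup (the identification $\dim\Pi_y=k:=n+r-1-\nu(\E)$ for $y\in P_x$, the definition of $\psi$, and the final dimension count) matches the paper, but there is a genuine gap in your finiteness argument, and you flag it yourself: the ``intermediate regime'' in which a positive-dimensional fiber $D\subsetneq P_x$ of $\psi$ produces a trivial direct summand $\O_\Pi^{\oplus s}$ of $\E_{|\Pi}$ with $2\le s<r$ is not handled. Your proposed fix (restrict to the sub-$\P^{s-1}$ and ``iterate'') is not carried out, and it is unclear what it would converge to: for $x'\in\Pi$ you only obtain $D\subseteq P_{x'}$, i.e.\ that $\Phi_{|\Pi}$ lands in the sub-Grassmannian of $(r-1)$-planes containing a fixed $\P^{s-1}$, which does not by itself contract anything and so does not contradict the bigness of $\det\E$. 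As written, the proof of finiteness is incomplete.

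The missing observation, which is how the paper closes this step, is the rigidity of the source: since $P_x\cong\P^{r-1}$ has Picard group $\Z$, the pullback under $\psi$ of an ample line bundle on (the closure of) $\psi(P_x)$ is either trivial or ample, so $\psi$ is either constant or finite onto its image; there is no intermediate regime. One therefore only has to exclude the globally constant case, where one has the full strength of $\Pi_y=\Pi_{y'}$ for \emph{all} $y,y'\in P_x$: for any $x'\in\Pi_y$, Lemma~\ref{p} then gives $y'\in P_{x'}$ for every $y'\in P_x$, hence $P_x\subseteq P_{x'}$ and so $P_x=P_{x'}$ (both being $(r-1)$-planes), i.e.\ $\Phi(x)=\Phi(x')$. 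Thus $\Phi$ contracts the positive-dimensional $\Pi_y$ through the general point $x$, contradicting $c_1(\E)^n>0$. (Equivalently, in the constant case your own mechanism gives an injection $P_x\hookrightarrow\P H^0((\E_{|\Pi})^*)$, forcing $\E_{|\Pi}\cong\O_\Pi^{\oplus r}$, which is exactly your ``extreme case''.) If you insert the constant-or-finite dichotomy, your argument becomes complete and essentially coincides with the paper's.
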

\begin{proof}
By \cite[Thm.~2]{ls} and Lemma \ref{p}, for any $y \in P_x$ we have that $\Pi_y$ is a linear subspace in $\P^N$ of dimension $k \ge n+r-1-\nu(\E) \ge 1$ such that $x \in \Pi_y \subseteq X$. Since $F_{n+r-\nu(\E)}(X,x) = \emptyset$ we get that $k = n+r-1-\nu(\E)$ and we can define a morphism
$$\psi: \P^{r-1} \cong P_x \to F_k(X,x)$$
by $\psi(y)=\Pi_y$. If $\psi$ is constant, we claim that for any $y \in P_x$ we have that
$$\Pi_y \subseteq \Phi^{-1}(\Phi(x)).$$
In fact, let $x' \in \Pi_y$. For every $y' \in P_x$, since $\Pi_y=\Pi_{y'}$, we have that $x' \in \Pi_{y'}$, that is $y' \in P_{x'}$ by Lemma  \ref{p}. Hence $P_x \subseteq P_{x'}$ and we deduce that $P_x = P_{x'}$ and therefore 
$$\Phi(x)=[P_x]=[P_{x'}]=\Phi(x')$$
and the claim is proved. Now $\Pi_y$ has dimension $k \ge 1$, hence the fibers of $\Phi$ have dimension at least $1$. This implies that $\det \E$ is not big, a contradiction. Therefore $\psi$ is finite onto its image and we deduce that $\dim F_k(X,x) \ge r-1$.
\end{proof}

We can now prove our second theorem.

\renewcommand{\proofname}{Proof of Theorem \ref{main2}}

\begin{proof}
Suppose that $\nu(\E) \le \frac{n}{2}+r-1$. 

If $n=1$ then $\nu(\E) \le r-1$. But $\varphi(\P(\E_x)) = P_x = \P^{r-1}$, hence $\nu(\E) = r-1$ and $P_x = P_{x'}$ for every $x \ne x' \in X$, that is $\Phi(x)=\Phi(x')$. Hence \cite[Thm.~2]{ls} gives that $(X,\O_X(1)) \cong (\P^1,\O_{\P^1}(1))$ and we are in case (i). Vice versa, if we are in case (i) then $b=0$ and $(X,\O_X(1)) \cong (\P^1,\O_{\P^1}(1))$. It follows by Remark \ref{kno1} that $\E \cong \O_{\P^1}^{\oplus r}$, hence $\nu(\E)=r-1$.

Suppose from now on that $n \ge 2$. 

Let $x \in X$. By \cite[Thm.~2]{ls}, for any $y \in P_x$ we have that $\Pi_y$ is a linear subspace of dimension $k$ in $\P^N$ such that, using Lemma \ref{p}, $x \in \Pi_y \subseteq X$ and $k \ge n+r-1-\nu(\E) \ge \frac{n}{2}$. 

Assume that we are not in case (i). 

Then \cite[Main Thm.]{sa2} implies that $(X,H)$ is one of the following:
\begin{itemize}
\item [(1)] $(Q_{2m}, \O_{Q_{2m}}(1))$.
\item [(2)] $(\mathbb G(1,m+1), \O_{\mathbb G(1,m+1)}(1))$ (the Pl\"ucker line bundle).
\end{itemize}
We can assume that $m \ge 2$, for otherwise, when $m=1$, case (2) is also case (i) and in case (1) we know that $\E \cong (\mathcal S')^{\oplus r}$ or $(\mathcal S'')^{\oplus r}$ by Proposition \ref{big2}(iii), thus giving case (ii). Note that $\det \E$ is globally generated and big by \cite[Lemma 3.2]{lo}, since we are excluding (i). In particular $r \ge 2$. Now $n=2m$ and $F_{m+1}(X,x)= \emptyset$. Hence $m \ge k \ge 2m+r-1-\nu(\E) \ge m$ and we get that $k = m$ and $\nu(\E)=m+r-1$. Then $\dim F_m(X,x) \ge r-1$ by Proposition \ref{fam1}. In case (1) we know by Lemma \ref{kno2}(iv) that $r \ge 2^{m-1}$ and $\dim F_m(X,x)= \frac{m(m-1)}{2}$, thus the only possibilities are for $2 \le m \le 3$ and $r=2^{m-1}$. Hence $\E$ is a spinor bundle by Lemma \ref{kno2}(iv) and we get case (ii). In case (2) we can assume that $m \ge 3$ for otherwise we are in case (1). We know that $\dim F_m(X,x)= 1$, hence $r=2$ and $\varphi(\P(\E))$ has dimension $\nu(\E)=m+1$. Moreover $\varphi(\P(\E))$ is covered by a family of dimension $2m$ of lines $\{P_x, x \in X\}$: the family has dimension $2m$ because $c_1(\E)^n>0$, hence $\Phi$ is birational onto its image. Therefore $\varphi(\P(\E))=\P^{m+1} = \P H^0(\E)=\P^{2d-1}$ where $d = \deg X$. But $d= \frac{(2m)!}{m!(m+1)!}$ giving the contradiction $m+1=2d-1$.

Assume now that we are in case (i).

If $b=0$ then $(X,\O_X(1)) \cong (\P^n,\O_{\P^n}(1))$ and Remark \ref{kno1} gives that $\E \cong \O_{\P^n}^{\oplus r}$, hence $\nu(\E)=r-1$.

Now suppose that $b \ge 1$. By \cite[Main Thm.]{sa2} we have that $\Pi_y \subseteq \P^{n-b}$ for a general $y$.

If $c_1(\E)^n=0$ we have by \cite[Cor.~3]{ls} that $\Pi_y$ is a general fiber $F$ of $\Phi$ and therefore $\Phi_{|\P^{n-b}}$ must be constant. Thus $F=\Pi_y=\P^{n-b}, \nu(\E)=b+r-1$ and $\det \E$ is trivial on the fibers of $p$. Hence there is a globally generated line bundle $M$ on $B$ such that $\det \E = p^*M$ and $\E$ is as in (i1) by \cite[Lemmas 5.1 and 4.1]{lo}. 

If $c_1(\E)^n>0$ we know by \cite[Cor.~2]{ls} that through a general point $x \in X$ we can find infinitely many linear spaces  
$\Pi_y$, hence $n+r-1-\nu(\E) \le n-b-1$, that is $\nu(\E) \ge b+r$ and then $b \le \frac{n}{2}-1$. Also, if $\nu(\E) = b+r$ we have, for every $y \in \varphi(\P(\E))$,
$$\dim \Pi_y \ge n+r-1-\nu(\E) = n-b-1 \ge b+1.$$ 
Since $\Pi_y$ is a linear space, we deduce that $p_{|\Pi_y} : \Pi_y \to B$ is constant. Thus we can apply Lemma \ref{nu=b+r}(ii) and find that $\E_{|f}\cong T_{\P^{n-b}}(-1)\oplus \O_{\P^{n-b}}^{\oplus (r-n+b)}$. This gives case (i2).
\end{proof}
\renewcommand{\proofname}{Proof}

\end{document}